\documentclass[reqno,12pt]{amsart}

\NeedsTeXFormat{LaTeX2e}[1994/12/01]

%\usepackage{vmargin}
%\setpapersize{A4}
\textheight24.2cm
\textwidth16cm

\addtolength{\topmargin}{-50pt}
\addtolength{\oddsidemargin}{-1.8cm}
\addtolength{\evensidemargin}{-1.8cm}

\usepackage{amsmath}
\usepackage{amsfonts}
\usepackage{amssymb}
\usepackage{eufrak}
\usepackage{amscd}
\usepackage{amsthm}
\usepackage{amstext}
\usepackage[all]{xy}
%\CompileMatrices
%\usepackage{isolatin1}

\newcommand{\id}{\operatorname{id}}

\newcommand{\ind}{\operatorname{ind}}

   \theoremstyle{plain}%default
   \newtheorem{thm}{Theorem}[section]
   
   \newtheorem{lem}[thm]{Lemma}
   \newtheorem{cor}[thm]{Corollary}
   \theoremstyle{definition}
   
   \newtheorem{defn}[thm]{Definition}
   \newtheorem{example}[thm]{Example}

   \newtheorem{remark}[thm]{Remark}

 %\renewcommand{\theequation}{\thesection.\arabic{equation}}
 %\numberwithin{equation}{section}

 %\usepackage{showkeys}

\title{Balanced pairs of operators and their relative index}

\author{V. Manuilov}

\date{}

%\subjclass[2010]{19K99, 46L05}

\address{Moscow State University,
Leninskie Gory, Moscow, 
119991, Russia}%, and Harbin Institute of Technology,  Harbin, 
%P. R. China}

\email{manuilov@mech.math.msu.su}

\thanks{The author acknowledges partial support by the RFBR grant No. 16-01-00373.}

\begin{document}

\maketitle

\begin{abstract}
We show that the $K_1$ group of a $C^*$-algebra $A$ can be defined as homotopy classes of pairs, called {\it balanced}, of not necessarily unitary matrices over $A$ that have equal defects from being unitary. We also consider pairs of order zero pseudodifferential operators, not necessarily elliptic, with symbols being a balanced pair. A relative index is defined for such pairs of operators and it equals the topological index of the pair of their symbols.

\end{abstract}

\section{Introduction}

In \cite{M_K} we have discovered that the $K_0$ group for a $C^*$-algebra $A$ can be defined using more general elements than projections. Namely. one can consider homotopy classes of {\it pairs} $(a,b)$ of selfadjoint matrices with entries in $A$ that satisfy the relations $p(a)=p(b)$ for polynomials $p(t)=t(1-t)$ and $t^2(1-t)$ (or, equivalently, for all polynomials satisfying $p(0)=p(1)=0$). Genuine projections satisfy $p(a)=p(b)=0$, but this property is too restrictive. It suffices to require only that the defect from being a projection should be the same for $a$ and for $b$, but it doesn't need to vanish.  
In this paper we give a similar description for the $K_1$ group. It is generated by {\it balanced} pairs $(a,b)$, where $a,b$ need not to be unitaries, but their defect from being unitary should be the same.

Then we consider pairs $(D_1,D_2)$ of order zero pseudodifferential operators on a manifold $M$, such that their symbols $\sigma_1$, $\sigma_2$ are a balanced pair of matrix-valued functions on the cospherical bundle $S^*M$ over $M$. As $(\sigma_1,\sigma_2)$ represents an element in $K^1(S^*M)$, so its topological index is defined. We show that one can decompose $L^2(M)$ as an orthogonal direct sum $L^2(M)=H_1\oplus H_2$ in such a way that the restrictions of $D_1$ and of $D_2$ onto $H_2$ are almost the same, and the restrictions of $D_1$, $D_1^*$, $D_2$, $D_2^*$ onto $H_1$ are left-invertible modulo compact operators. The latter property allows to define a relative analytical index for the pair $((D_1)|_{H_1},(D_2)|_{H_1})$ and to show that it is equal to the topological index determined by the pair of symbols. We need hardly mention that neither the symbols $\sigma_1$, $\sigma_2$ have to be invertible, nor the operators $D_1$, $D_2$ have to be elliptic.

\section{$K_1$ group --- another description}

Let $A$ be a $C^*$-algebra. For two contractions $a,b\in A$, consider the following sets of relations 
\begin{equation}\label{rel1}
a^*a=b^*b;\quad aa^*=bb^*;\quad a(1-a^*a)=b(1-b^*b);\quad (1-aa^*)a=(1-bb^*)b
\end{equation}
and
\begin{equation}\label{rel2}
(a-b)c=(a^*-b^*)c=0 \mbox{ for\ } c \mbox{ being\ one\ of\ } 1-a^*a, 1-aa^*, 1-b^*b,1-bb^*. 
\end{equation}

Although we use the unit in these relations for convenience of notation, these relations make sense for non-unital $C^*$-algebras as well.  

\begin{lem}
Under the assumption that $a$ and $b$ are contractions, the sets of relations (\ref{rel1}) and (\ref{rel2}) are equivalent.

\end{lem}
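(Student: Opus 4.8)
The plan is to prove the equivalence in both directions, working always under the standing assumption that $a$ and $b$ are contractions, so that all the operators $1-a^*a$, $1-aa^*$, $1-b^*b$, $1-bb^*$ are positive. I would first establish the easy implication, namely that (\ref{rel2}) implies (\ref{rel1}). Assuming the first two relations of (\ref{rel1}), $a^*a=b^*b$ and $aa^*=bb^*$, the elements $1-a^*a=1-b^*b$ and $1-aa^*=1-bb^*$ coincide, so the relations in (\ref{rel2}) of the form $(a-b)c=0$ directly yield $a(1-a^*a)=b(1-a^*a)=b(1-b^*b)$ and $(1-aa^*)a=(1-aa^*)b=(1-bb^*)b$, which are exactly the last two relations of (\ref{rel1}). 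The subtlety here is that (\ref{rel2}) as written does not literally contain $a^*a=b^*b$, so I would need to first extract those two identities; I expect they must either be derivable from (\ref{rel2}) or, more likely, that the intended reading is that both relation-sets include $a^*a=b^*b$ and $aa^*=bb^*$ as a common hypothesis, with the genuine content being the equivalence of the remaining relations.

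For the reverse implication, (\ref{rel1}) implies (\ref{rel2}), I would again start from $a^*a=b^*b$ and $aa^*=bb^*$ and must deduce that $a-b$ and $a^*-b^*$ annihilate each of the four positive elements $c$. The key computational device is to consider $(a-b)(1-a^*a)$ and show it is zero by computing the norm of $(a-b)(1-a^*a)$ via its adjoint, i.e.\ examining $(1-a^*a)(a^*-b^*)(a-b)(1-a^*a)$ or, more simply, expanding $\|(a-b)(1-a^*a)\|^2$. Writing $e=1-a^*a=1-b^*b$, I would compute $e(a^*-b^*)(a-b)e = e(a^*a - a^*b - b^*a + b^*b)e = e(2a^*a - a^*b - b^*a)e$, using $a^*a=b^*b$. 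The third relation of (\ref{rel1}), $a(1-a^*a)=b(1-b^*b)$, which reads $ae=be$, tells me $ae=be$, and hence $e a^* = e b^*$ after taking adjoints; substituting these should collapse the cross terms and force the expression to vanish.

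The main obstacle, and the step I would spend the most care on, is handling the positivity and the functional-calculus argument that lets me pass from $ae=be$ (a relation about the element $e=1-a^*a$ itself) to the stronger statement that $a-b$ annihilates all four distinct contraction-defects simultaneously, since $1-a^*a$, $1-aa^*$, $1-b^*b$, $1-bb^*$ are a priori four different positive elements. Here I would exploit that for a contraction $a$ one has the polar-type identity $a(1-a^*a)=(1-aa^*)a$, so that $ae$ on the source side equals $(1-aa^*)a$ on the target side, allowing me to transfer the vanishing between the ``left'' and ``right'' defects. Combining $ae=be$ with its adjoint and with $aa^*=bb^*$, together with the elementary fact that for positive $x$ one has $yx=0 \iff yx^{1/2}=0 \iff yx^n=0$, I would then bootstrap the single relation into annihilation of each of the four required elements $c$, thereby recovering all of (\ref{rel2}). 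I expect the entire argument to be purely algebraic once the positivity is in hand, with no homotopy or $K$-theory input needed at this stage.
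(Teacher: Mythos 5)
There is a genuine gap, and it sits exactly where you chose to punt. The relations (\ref{rel2}) do \emph{not} contain $a^*a=b^*b$ and $aa^*=bb^*$ as a shared hypothesis; deriving these two identities from (\ref{rel2}) is the entire content of the nontrivial implication (which you mislabel as the easy one --- the paper treats $(\ref{rel2})\Rightarrow(\ref{rel1})$ as the hard direction), and it is the only place where contractivity is used. The paper's argument runs: from (\ref{rel2}),
\begin{equation*}
(1-a^*a)a^*a=a^*(1-aa^*)a=a^*(1-aa^*)b=(1-a^*a)a^*b=(1-a^*a)b^*b,
\end{equation*}
whence $(1-a^*a)(1-b^*b)=(1-a^*a)^2$; taking adjoints and interchanging the roles of $a$ and $b$ gives $(1-a^*a)^2=(1-b^*b)^2$, and since $1-a^*a$ and $1-b^*b$ are both positive (this is where the contraction hypothesis enters), uniqueness of the positive square root yields $a^*a=b^*b$, and similarly $aa^*=bb^*$; the last two relations of (\ref{rel1}) then follow algebraically. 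Your proposal never produces $a^*a=b^*b$; instead you guess that it is ``more likely'' a common hypothesis of both relation sets, which is not the intended reading, so this direction is simply not proved.

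Your treatment of the converse is also only a sketch, and the part you flagged as the main obstacle is a real one that your plan does not overcome. Once $a^*a=b^*b$ and $aa^*=bb^*$ are given, the identities $(a-b)(1-a^*a)=(a-b)(1-b^*b)=0$ and $(1-aa^*)(a-b)=(1-bb^*)(a-b)=0$ (hence their adjoints) follow by pure algebra from the last two relations of (\ref{rel1}) and the identity $x(1-x^*x)=(1-xx^*)x$; no norm computation is needed. But the remaining combinations, e.g.\ $(a-b)(1-aa^*)=0$ or $(a^*-b^*)(1-a^*a)=0$, do \emph{not} follow from (\ref{rel1}) by any bootstrap: for $a=\frac{1}{\sqrt2}\left(\begin{smallmatrix}1&0\\1&0\end{smallmatrix}\right)$ and $b=-a$ all four relations of (\ref{rel1}) hold, yet $(a-b)(1-aa^*)=2a(1-aa^*)\neq0$, because $\ker(1-a^*a)$ and $\ker(1-aa^*)$ are genuinely different subspaces. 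So the ``$yx=0\iff yx^{1/2}=0$'' device cannot transfer the vanishing between the two defects, and this direction requires either restricting (\ref{rel2}) to the combinations the paper actually uses or a different argument; in any case your proposal leaves it unestablished.
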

\begin{proof}
(\ref{rel2}) easily (algebraically) follows from (\ref{rel1}). To prove the opposite, one needs to use the uniqueness of the positive square root in $C^*$-algebras. It follows from (\ref{rel2}) that 
%\begin{eqnarray*}
$$
(1-a^*a)a^*a=a^*(1-aa^*)a=a^*(1-aa^*)b=(1-a^*a)a^*b=(1-a^*a)b^*b,
$$
%and
%$$
%(1-a^*a)a^*a=a^*(1-aa^*)a=b^*(1-aa^*)a=b^*a(1-a^*a)=b^*b(1-a^*a),
%$$
%\end{eqnarray*}
therefore 
$$
(1-a^*a)(1-b^*b)=(1-a^*a)-(1-a^*a)b^*b=(1-a^*a)-(1-a^*a)a^*a=(1-a^*a)^2. 
$$
Passing to adjoints, we get 
$$
(1-b^*b)(1-a^*a)=(1-a^*a)^2.
$$ 
Interchanging $a$ and $b$, we get 
$$
(1-a^*a)(1-b^*b)=(1-b^*b)^2.
$$
Thus, $(1-a^*a)^2=(1-b^*b)^2$, hence $1-a^*a=1-b^*b$, and $a^*a=b^*b$. Similarly one can prove that $aa^*=bb^*$. The two other relations in (\ref{rel1}) can be shown algebraically.

\end{proof}

\begin{defn}
Pairs $(a,b)$ of contractions satisfying the relations (\ref{rel1}) or (\ref{rel2}) are called {\it balanced}.

\end{defn}

Two pairs, $(a_0,b_0)$ and $(a_1,b_1)$ of elements in $A$, are {\it homotopy equivalent} if there are paths $a=(a_t),b=(b_t):[0,1]\to A$, connecting $a_0$ with $a_1$ and $b_0$ with $b_1$ respectively, such that the pair $(a_t,b_t)$ is balanced for each $t\in[0,1]$.

Note that the pair $(0,0)$ is balanced. A pair $(a,b)$ is {\it homotopy trivial} if it is homotopy equivalent to $(0,0)$.

\begin{lem}\label{L1}
The pair $(a,a)$ is homotopy trivial for any $a\in A$.

\end{lem}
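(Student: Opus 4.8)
The plan is to connect $(a,a)$ to $(0,0)$ by scaling both coordinates simultaneously toward zero. The key observation is that the balance condition is automatically satisfied by any pair whose two entries coincide: setting $b=a$ in (\ref{rel1}) turns each of the four equations into a tautology $X=X$, and likewise for (\ref{rel2}). Hence the only genuine constraint I need to maintain along a homotopy between $(a,a)$ and $(0,0)$ is that both coordinates remain equal contractions.

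Concretely, I would define $a_t=b_t=(1-t)a$ for $t\in[0,1]$. Since $a$ is a contraction we have $\|(1-t)a\|=(1-t)\|a\|\le\|a\|\le 1$ for every $t\in[0,1]$, so each $a_t$ is again a contraction. The assignment $t\mapsto(1-t)a$ is norm-continuous, and at each $t$ the two coordinates agree, so the pair $(a_t,b_t)$ is balanced throughout. At the endpoints one gets $(a_0,b_0)=(a,a)$ and $(a_1,b_1)=(0,0)$, which is exactly the homotopy required by the definition of homotopy triviality.

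There is essentially no obstacle here: the entire content is the remark that equal-entry pairs are trivially balanced, which reduces the problem to exhibiting a path of contractions from $a$ to $0$, and linear scaling supplies one. The single point worth flagging is the implicit hypothesis that $a$ be a contraction — if $\|a\|>1$ then $(a,a)$ fails to be a balanced pair in the first place and the assertion is vacuous — so I read the lemma as asserting triviality for contractions $a$, a case the scaling homotopy dispatches uniformly.
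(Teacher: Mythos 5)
Your proof is correct and uses exactly the paper's argument: the linear scaling homotopy $t\mapsto (1-t)a$ (the paper writes it as $a_t=t\cdot a$, just with the opposite orientation), relying on the observation that equal-entry pairs of contractions are automatically balanced.
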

\begin{proof}
The linear homotopy $a_t=t\cdot a$ would do.

\end{proof}

\begin{lem}
If $\|a\|<1$, $\|b\|<1$ and the pair $(a,b)$ is balanced then it is homotopy trivial.

\end{lem}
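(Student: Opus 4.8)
The plan is to exploit the hypothesis $\|a\|<1$, $\|b\|<1$ to collapse the balanced condition entirely, showing that in fact $a=b$, after which the conclusion follows immediately from Lemma~\ref{L1}. The key observation is that when $\|a\|<1$, the positive element $1-a^*a$ is invertible: its spectrum lies in the interval $[1-\|a\|^2,1]$, which is bounded away from $0$, so $1-a^*a$ is bounded below and hence has a bounded inverse in the unitization of $A$ (and the same holds for $1-aa^*$, $1-b^*b$, $1-bb^*$).

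First I would invoke the equivalence of (\ref{rel1}) and (\ref{rel2}) established in the first lemma, so that I may work with the form (\ref{rel2}) of the balanced relations. Taking $c=1-a^*a$ in (\ref{rel2}) gives
\[
(a-b)(1-a^*a)=0.
\]
Since $1-a^*a$ is invertible, multiplying on the right by $(1-a^*a)^{-1}$ yields $a-b=0$, that is, $a=b$. (One could equally use any of the other three choices of $c$, each of which is invertible under the norm hypothesis.)

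Finally, the pair $(a,b)=(a,a)$ is homotopy trivial by Lemma~\ref{L1}, via the linear homotopy $a_t=t\cdot a$. This completes the argument. I do not expect a genuine obstacle here; the only point requiring care is the logical order, namely that one should pass to the adjoint-free consequence $a=b$ before attempting any homotopy, rather than trying to deform $a$ and $b$ independently while maintaining the balanced condition. The norm strict inequality is exactly what is needed to make the defect elements $1-a^*a$ etc.\ invertible; without it the balanced pair need not be diagonal, which is precisely why the general case (treated subsequently) is more subtle.
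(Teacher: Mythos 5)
Your proof is correct and is exactly the paper's argument: the paper's proof is the one-line remark that the hypothesis forces $a=b$, and your invertibility argument for $1-a^*a$ combined with the relation $(a-b)(1-a^*a)=0$ from (\ref{rel2}) is the intended justification, after which Lemma \ref{L1} finishes as you say.
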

\begin{proof}
The assumption implies that $a=b$.

\end{proof}

Let $M_n(A)$ denote the $n{\times n}$ matrix algebra over $A$. Two balanced pairs, $(a_0,b_0)$ and $(a_1,b_1)$, where $a_0,a_1,b_0,b_1\in M_n(A)$, are {\it equivalent} if there is a homotopy trivial pair $(a,b)$, $a,b\in M_m(A)$ for some integer $m$, such that the balanced pairs $(a_0\oplus a,b_0\oplus b)$ and $(a_1\oplus a,b_1\oplus b)$ are homotopy equivalent in $M_{n+m}(A)$. Using the standard inclusion $M_n(A)\subset M_{n+k}(A)$ (as the upper left corner) we may speak about equivalence of pairs of different matrix size.

Let $[(a,b)]$ denote the equivalence class of the pair $(a,b)$, $a,b\in M_n(A)$.

For two pairs, $(a,b)$, $a,b\in M_n(A)$, and $(c,d)$, $c,d\in M_m(A)$, set 
$$
[(a,b)]+[(c,d)]=[(a\oplus c,b\oplus d)]. 
$$
The result obviously doesn't depend on a choice of representatives. Also $[(a,b)]+[(c,d)]=[(a,b)]$ when $(c,d)$ is homotopy trivial.

\begin{lem}\label{commut}
The addition is commutative and associative.

\end{lem}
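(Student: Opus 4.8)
The plan is to treat the two assertions separately, since associativity is essentially formal while commutativity requires a genuine homotopy.

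For associativity I would simply observe that the block-diagonal operation on matrices is strictly associative: if $a\in M_n(A)$, $c\in M_m(A)$, $e\in M_k(A)$, then $(a\oplus c)\oplus e$ and $a\oplus(c\oplus e)$ are literally the same element of $M_{n+m+k}(A)$, and likewise on the second coordinate. Hence $([(a,b)]+[(c,d)])+[(e,f)]$ and $[(a,b)]+([(c,d)]+[(e,f)])$ are both represented by the identical pair $(a\oplus c\oplus e,\,b\oplus d\oplus f)$, so they coincide; no homotopy is needed here.

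For commutativity the point is that $a\oplus c$ and $c\oplus a$ are not equal but differ by conjugation by a block-swap unitary, so I would first record the auxiliary fact that balancedness is preserved under conjugation by a unitary. If $(x,y)$ is balanced and $w$ is a unitary, then $(wxw^*,wyw^*)$ is again balanced: each of the four relations in (\ref{rel1}) is conjugation-covariant, e.g. $(wxw^*)^*(wxw^*)=wx^*xw^*=wy^*yw^*=(wyw^*)^*(wyw^*)$ and $wxw^*\bigl(1-(wxw^*)^*(wxw^*)\bigr)=w\,x(1-x^*x)\,w^*=w\,y(1-y^*y)\,w^*$, using $w^*w=1$; the two remaining relations are checked in exactly the same way. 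These computations use only the algebraic identities together with $w^*w=ww^*=1$, so I may take the conjugating unitaries below to be scalar matrices in $M_N(\mathbb{C})\subset M_N(A^+)$, whence the conjugates stay in $M_N(A)$ and the relations, written with a formal unit, continue to make sense in the non-unital case.

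Then, with $N=n+m$, I would choose a norm-continuous path $w_t$, $t\in[0,1]$, of unitaries in $M_N(\mathbb{C})$ with $w_0=1$ and $w_1$ the permutation swapping the first $n$ and the last $m$ coordinates; such a path exists because $U(N)$ is connected, and for $n=m$ one may take the explicit rotation $w_t=\left(\begin{smallmatrix}\cos\frac{\pi t}{2}\,1_n & -\sin\frac{\pi t}{2}\,1_n\\[2pt]\sin\frac{\pi t}{2}\,1_n & \cos\frac{\pi t}{2}\,1_n\end{smallmatrix}\right)$, which conjugates $a\oplus c$ to $c\oplus a$. Applying the auxiliary fact to $(x,y)=(a\oplus c,\,b\oplus d)$, the path $t\mapsto\bigl(w_t(a\oplus c)w_t^*,\,w_t(b\oplus d)w_t^*\bigr)$ is a homotopy through balanced pairs from $(a\oplus c,\,b\oplus d)$ to $(c\oplus a,\,d\oplus b)$. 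Thus these pairs are homotopy equivalent, hence equivalent (the $m=0$ case of the definition of equivalence), and so define the same class, giving $[(a,b)]+[(c,d)]=[(c,d)]+[(a,b)]$. The only real content is the conjugation-covariance of (\ref{rel1}); once that is in hand, commutativity follows from connectedness of $U(N)$ and associativity is immediate. I expect the main, though still routine, obstacle to be the bookkeeping of the non-unital case, namely keeping the conjugating unitaries scalar so that the conjugates remain in $M_N(A)$.
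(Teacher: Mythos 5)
Your proof is correct and follows essentially the same route as the paper: commutativity via the observation that the relations (\ref{rel1}) are preserved under unitary conjugation, combined with a rotation path of (scalar) unitaries connecting the identity to the block swap. Your remark that associativity is strict (the block sums are literally equal) is a slight simplification of the paper, which invokes the same unitary-path argument for associativity as well, but the content is the same.
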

\begin{proof}
If $(u_t)_{t\in[0,1]}$ is a path of unitaries in $A$, $u_1=1$, $u_0=u$, then $[(u^*au,u^*bu)]=[(a,b)]$ for any $a,b\in A$, as the relations (\ref{rel1}) are not affected by unitary equivalence. The standard argument with a unitary path connecting $\left(\begin{smallmatrix}1&0\\0&1\end{smallmatrix}\right)$ and $\left(\begin{smallmatrix}0&1\\1&0\end{smallmatrix}\right)$ proves commutativity. A similar argument proves associativity.

\end{proof}

\begin{lem}
$[(a,b)]+[(b,a)]=[(0,0)]$ for any balanced pair $(a,b)$.

\end{lem}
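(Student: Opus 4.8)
The plan is to show that the sum $[(a,b)]+[(b,a)]=[(a\oplus b,\,b\oplus a)]$ is homotopy trivial by rotating the second entry of the pair until it coincides with the first. Concretely, I would keep the first entry fixed at $a\oplus b$ and deform the second by conjugation with the scalar rotation matrices
$$
R_t=\begin{pmatrix}\cos\frac{\pi t}{2}&-\sin\frac{\pi t}{2}\\[2pt]\sin\frac{\pi t}{2}&\cos\frac{\pi t}{2}\end{pmatrix},\qquad t\in[0,1],
$$
setting $b_t=R_t(b\oplus a)R_t^*$. At $t=0$ this is $b\oplus a$, and since $R_1$ is the coordinate swap one computes $b_1=a\oplus b$. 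Thus the path $\bigl(a\oplus b,\,b_t\bigr)$ connects $(a\oplus b,\,b\oplus a)$ to the diagonal pair $(a\oplus b,\,a\oplus b)$, which is homotopy trivial by Lemma \ref{L1}. Continuity of $t\mapsto b_t$ is clear, and each $b_t$ is a contraction, being a unitary conjugate of the contraction $b\oplus a$.

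The heart of the argument is to verify that $(a\oplus b,\,b_t)$ is balanced for every $t$, i.e. that it satisfies (\ref{rel1}), and here the balancedness of $(a,b)$ enters essentially. Writing $h=a^*a=b^*b$ and $k=aa^*=bb^*$, the fixed first entry satisfies $(a\oplus b)^*(a\oplus b)=\operatorname{diag}(h,h)$ and $(a\oplus b)(a\oplus b)^*=\operatorname{diag}(k,k)$. Because these are block-scalar multiples of the identity, they commute with $R_t$, so $b_t^*b_t=R_t\operatorname{diag}(h,h)R_t^*=\operatorname{diag}(h,h)$ and likewise $b_tb_t^*=\operatorname{diag}(k,k)$; this yields the first two relations at once. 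For the remaining two, put $p=a(1-a^*a)=b(1-b^*b)$ and $q=(1-aa^*)a=(1-bb^*)b$, the two equalities holding by balancedness of $(a,b)$. Then $(a\oplus b)\bigl(1-\operatorname{diag}(h,h)\bigr)=\operatorname{diag}(p,p)$, and running the same computation through $R_t$ (again using that $\operatorname{diag}(p,p)$ commutes with the scalar $R_t$) gives $b_t(1-b_t^*b_t)=\operatorname{diag}(p,p)$ and, symmetrically, $(1-b_tb_t^*)b_t=\operatorname{diag}(q,q)$. Hence all four relations of (\ref{rel1}) hold along the homotopy.

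The step I expect to demand the most care is precisely this verification, and the point to stress is why it succeeds: balancedness forces the two diagonal blocks of each of the four relevant products to be \emph{equal}, turning them into block-scalar elements that are invariant under conjugation by $R_t$. Without the balanced relations the off-diagonal terms produced by $R_t$ would not cancel and the pair would leave the balanced class. Once this is in hand, combining the homotopy with Lemma \ref{L1} gives $[(a,b)]+[(b,a)]=[(a\oplus b,\,a\oplus b)]=[(0,0)]$, which also identifies $[(b,a)]$ as the inverse of $[(a,b)]$.
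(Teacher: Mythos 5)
Your proposal is correct and follows essentially the same route as the paper: keep the first entry fixed at $a\oplus b$ and rotate the second entry from $b\oplus a$ to $a\oplus b$ by conjugation with scalar rotation matrices, using that balancedness makes the four relevant products block-scalar (hence invariant under the rotation) so that the pair stays balanced along the path, and then invoke Lemma \ref{L1}. The paper parametrizes the rotation as $U_t^*(\cdot)U_t$ running from $t=\pi/2$ back to $t=0$, but this is only a notational difference.
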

\begin{proof}
%By Lemma \ref{commut}, %$[(a,b)]+[(b,a)]=[\left(\left(\begin{smallmatrix}a&0\\0&b\end{smallmatrix}\right),\left(\begin{smallmatrix}b&0\\0&a\end{smallm%atrix}\right)\right)]=[\left(\left(\begin{smallmatrix}a&0\\0&b\end{smallmatrix}\right),\left(\begin{smallmatrix}a&0\\0&b\end{s%mallmatrix}\right)\right)]=[(0,0)]$.

By definition, $[(a,b)]+[(b,a)]=\left[\left(\left(\begin{smallmatrix}a&0\\0&b\end{smallmatrix}\right),\left(\begin{smallmatrix}b&0\\0&a\end{smallmatrix}\right)\right)\right]$.

Set 
\begin{equation}\label{U}
U_t=\left(\begin{smallmatrix}\cos t&-\sin t\\ \sin t&\cos t\end{smallmatrix}\right),
\end{equation}
$$
A=\left(\begin{smallmatrix}a&0\\0&b\end{smallmatrix}\right),\quad B_t=U_t^*\left(\begin{smallmatrix}a&0\\0&b\end{smallmatrix}\right)U_t. 
$$
We claim that if the pair $(a,b)$ is balanced then the pair $(A,B_t)$ is balanced for any $t$. If true, this implies that 
$$
\left[\left(\left(\begin{smallmatrix}a&0\\0&b\end{smallmatrix}\right),\left(\begin{smallmatrix}b&0\\0&a\end{smallmatrix}\right)\right)\right]=[(A,B_{\pi/2})]=[(A,B_0)]=\left[\left(\left(\begin{smallmatrix}a&0\\0&b\end{smallmatrix}\right),\left(\begin{smallmatrix}a&0\\0&b\end{smallmatrix}\right)\right)\right]=[0,0]
$$ 
by Lemma \ref{L1}. So, it remains to check that the relations (\ref{rel1}) hold for $(A,B_t)$. Note that if the pair $(a,b)$ is balanced then $\left(\begin{smallmatrix}a^*a&0\\0&b^*b\end{smallmatrix}\right)=a^*a\left(\begin{smallmatrix}1&0\\0&1\end{smallmatrix}\right)$. So,
$$
B_t^*B_t=U_t^*\left(\begin{smallmatrix}a^*&0\\0&b^*\end{smallmatrix}\right)U_t U_t^*\left(\begin{smallmatrix}a&0\\0&b\end{smallmatrix}\right)U_t=U_t^*a^*aU_t=A^*A;
$$
$$
B_t(1-B_t^*B_t)=U_t^*\left(\begin{smallmatrix}a&0\\0&b\end{smallmatrix}\right)(1-a^*a)U_t=U_t^*\left(\begin{smallmatrix}a(1-a^*a)&0\\0&a(1-a^*a)\end{smallmatrix}\right)U_t=A(1-A^*A).
$$

The two other relations in (\ref{rel1}) are proved in the same way.

\end{proof}

\begin{lem}
$[(a,b)]+[(a^*,b^*)]=[(0,0)]$ for any balanced pair $(a,b)$.

\end{lem}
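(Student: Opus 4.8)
The plan is to reproduce the mechanism of the preceding lemma, but with the swap homotopy replaced by the standard null-homotopy of $u\oplus u^*$ in the unitary group, carried out at the level of balanced pairs. By definition,
$$
[(a,b)]+[(a^*,b^*)]=\left[\left(\left(\begin{smallmatrix}a&0\\0&a^*\end{smallmatrix}\right),\left(\begin{smallmatrix}b&0\\0&b^*\end{smallmatrix}\right)\right)\right],
$$
so it suffices to connect this pair, through balanced pairs, to one of the form $(c,c)$, which is homotopy trivial by Lemma \ref{L1}.

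With $U_t$ as in (\ref{U}), I would set
$$
A_t=U_t\left(\begin{smallmatrix}a&0\\0&1\end{smallmatrix}\right)U_t^*\left(\begin{smallmatrix}1&0\\0&a^*\end{smallmatrix}\right),\qquad B_t=U_t\left(\begin{smallmatrix}b&0\\0&1\end{smallmatrix}\right)U_t^*\left(\begin{smallmatrix}1&0\\0&b^*\end{smallmatrix}\right).
$$
Each factor is either unitary or a contraction, so $A_t$ and $B_t$ are contractions. At $t=0$ one gets $A_0=\left(\begin{smallmatrix}a&0\\0&a^*\end{smallmatrix}\right)$ and $B_0=\left(\begin{smallmatrix}b&0\\0&b^*\end{smallmatrix}\right)$, the pair to be killed, while conjugation by $U_{\pi/2}$ interchanges the two diagonal entries, giving $A_{\pi/2}=\left(\begin{smallmatrix}1&0\\0&aa^*\end{smallmatrix}\right)$ and $B_{\pi/2}=\left(\begin{smallmatrix}1&0\\0&bb^*\end{smallmatrix}\right)$; since $aa^*=bb^*$ these coincide. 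Thus $(A_{\pi/2},B_{\pi/2})$ has the form $(c,c)$ and is homotopy trivial, so the claim follows once I verify that $(A_t,B_t)$ is balanced for every $t$.

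Checking the relations (\ref{rel1}) for $(A_t,B_t)$ is the heart of the matter, and the computation is designed so that it collapses onto the relations for $(a,b)$. Abbreviating $P=\left(\begin{smallmatrix}a&0\\0&1\end{smallmatrix}\right)$, $Q=\left(\begin{smallmatrix}1&0\\0&a^*\end{smallmatrix}\right)$, with primed analogues for $b$, unitarity of $U_t$ gives $A_t^*A_t=Q^*U_tP^*P\,U_t^*Q$ and $B_t^*B_t=Q'^*U_tP'^*P'\,U_t^*Q'$. Here $P^*P=\left(\begin{smallmatrix}a^*a&0\\0&1\end{smallmatrix}\right)=\left(\begin{smallmatrix}b^*b&0\\0&1\end{smallmatrix}\right)=P'^*P'$ by the first relation of (\ref{rel1}), so, setting $M=U_t\left(\begin{smallmatrix}a^*a&0\\0&1\end{smallmatrix}\right)U_t^*$, the two products differ only through the outer conjugation by $Q$ versus $Q'$. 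Comparing $Q^*MQ$ with $Q'^*MQ'$ entry by entry reduces precisely to $a^*a=b^*b$, $aa^*=bb^*$, $aa^*a-a=bb^*b-b$ and its adjoint $a^*aa^*-a^*=b^*bb^*-b^*$, i.e. to the balanced relations for $(a,b)$; hence $A_t^*A_t=B_t^*B_t$. The relation $A_tA_t^*=B_tB_t^*$ is obtained the same way, now isolating $QQ^*=\left(\begin{smallmatrix}1&0\\0&a^*a\end{smallmatrix}\right)$ and comparing $PNP^*$ with $P'NP'^*$ for $N=U_t^*\left(\begin{smallmatrix}1&0\\0&a^*a\end{smallmatrix}\right)U_t$.

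The two defect relations are where I expect the genuine bookkeeping. Having established $A_t^*A_t=B_t^*B_t$ and $A_tA_t^*=B_tB_t^*$, these relations are equivalent to $(A_t-B_t)(1-A_t^*A_t)=0$ and $(1-A_tA_t^*)(A_t-B_t)=0$, which I would obtain by expanding $A_t-B_t$ in the block form above and feeding in the relations (\ref{rel2}) for $(a,b)$, such as $(a-b)(1-a^*a)=0$. Verifying that the \emph{whole} of (\ref{rel1}) (equivalently (\ref{rel2})) survives along the path, rather than only the two norm identities, is the main obstacle; once it is in place, the chain
$$
[(a,b)]+[(a^*,b^*)]=[(A_0,B_0)]=[(A_{\pi/2},B_{\pi/2})]=[(0,0)]
$$
completes the argument. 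This is exactly the balanced-pair incarnation of the classical fact that $u\oplus u^*$ is null-homotopic in the unitary group.
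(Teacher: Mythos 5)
Your proof is correct and follows essentially the same route as the paper: both use the Whitehead-type rotation homotopy $t\mapsto U_t\bigl(\begin{smallmatrix}a&0\\0&1\end{smallmatrix}\bigr)U_t^*\bigl(\begin{smallmatrix}1&0\\0&a^*\end{smallmatrix}\bigr)$ (the paper's version, $\bigl(\begin{smallmatrix}a&0\\0&1\end{smallmatrix}\bigr)U_t^*\bigl(\begin{smallmatrix}1&0\\0&a^*\end{smallmatrix}\bigr)U_t$, is just yours conjugated by $U_t$), deform $(a\oplus a^*,\,b\oplus b^*)$ through balanced pairs to a pair of equal elements, and invoke Lemma \ref{L1}. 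The level of verification you give for the remaining relations matches the paper's own ``checked similarly.''
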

\begin{proof}
Set 
$$
A_t=\left(\begin{smallmatrix}a&0\\0&1\end{smallmatrix}\right) U_t^*\left(\begin{smallmatrix}1&0\\0&a^*\end{smallmatrix}\right) U_t,\quad
B_t=\left(\begin{smallmatrix}b&0\\0&1\end{smallmatrix}\right) U_t^*\left(\begin{smallmatrix}1&0\\0&b^*\end{smallmatrix}\right) U_t,
$$
where $U_t$ is given by (\ref{U}). We claim that the pair $(A_t,B_t)$ is balanced for any $t$. Let us check the first relation in (\ref{rel1}) (other relations are checked similarly).
\begin{eqnarray*}
A_t^*A_t&=&C_t^*\left(\begin{smallmatrix}1&0\\0&a\end{smallmatrix}\right)C_t\left(\begin{smallmatrix}a^*a&0\\0&1\end{smallmatrix}\right)C_t^*\left(\begin{smallmatrix}1&0\\0&a^*\end{smallmatrix}\right)C_t\\
&=&C_t^*\left(\begin{smallmatrix}1&0\\0&a\end{smallmatrix}\right)\Bigl(\left(\begin{smallmatrix}1&0\\0&1\end{smallmatrix}\right)+(1-a^*a)\left(\begin{smallmatrix}-\cos^2 t&\cos t\sin t\\ \cos t\sin t&-\sin^2 t\end{smallmatrix}\right)\Bigr)\left(\begin{smallmatrix}1&0\\0&a^*\end{smallmatrix}\right)C_t\\
&=&C_t^*\left(\begin{smallmatrix}1&0\\0&a\end{smallmatrix}\right)\left(\begin{smallmatrix}1&0\\0&a^*\end{smallmatrix}\right)C_t+C_t^*\left(\begin{smallmatrix}1&0\\0&a\end{smallmatrix}\right)(1-b^*b)\left(\begin{smallmatrix}-\cos^2 t&\cos t\sin t\\ \cos t\sin t&-\sin^2 t\end{smallmatrix}\right)\left(\begin{smallmatrix}1&0\\0&a^*\end{smallmatrix}\right)C_t\\
&=&C_t^*\left(\begin{smallmatrix}1&0\\0&bb^*\end{smallmatrix}\right)C_t+C_t^*\left(\begin{smallmatrix}1&0\\0&b\end{smallmatrix}\right)(1-b^*b)\left(\begin{smallmatrix}-\cos^2 t&\cos t\sin t\\ \cos t\sin t&-\sin^2 t\end{smallmatrix}\right)\left(\begin{smallmatrix}1&0\\0&b^*\end{smallmatrix}\right)C_t= B_t^*B_t.
\end{eqnarray*}

One has $A_0=\left(\begin{smallmatrix}a&0\\0&a^*\end{smallmatrix}\right)$, $B_0=\left(\begin{smallmatrix}b&0\\0&b^*\end{smallmatrix}\right)$; $A_{\pi/2}=\left(\begin{smallmatrix}a^*a&0\\0&1\end{smallmatrix}\right)=B_{\pi/2}$. By Lemma \ref{L1}, we are done.

\end{proof}

We see that the equivalence classes of balanced pairs in matrix algebras over $A$ form an abelian group for any $C^*$-algebra $A$. Let us denote this group by $L_1(A)$.

Let now $A$ be unital.
Note that the pairs $(u,v)$, where $u,v\in A$ are unitaries, are patently balanced. The map 
$$
\iota([u])=[(u,1)]
$$
gives rise to a homomorphism $\iota:K_1(A)\to L_1(A)$.

Set 
\begin{equation}\label{unitary}
c=c(a,b)=1+b^*(a-b).
\end{equation}

\begin{lem}
Let $(a,b)$ be a balanced pair. Then
\begin{enumerate}
\item
$c(a,b)$ is unitary; similarly, $1+(a-b)b^*$ is unitary;
\item
$bc=a$;
\item
$b^*b$ commutes with $c$ and with $c^*$, hence with $f(c)$ for any continuous function $f$ on the spectrum of $c$; 
\item
$(1-b^*b)(c-1)=0=(c-1)(1-b^*b)$, hence $(1-b^*b)g(c)=0$ for any continuous function $g$ on the spectrum of $c$ with $g(1)=0$

\end{enumerate}

\end{lem}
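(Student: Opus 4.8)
The plan is to reduce the whole lemma to a handful of \emph{annihilation relations} on the difference $d:=a-b$. Since $(a,b)$ is balanced we have $a^*a=b^*b$ and $aa^*=bb^*$, so the four defect elements appearing in (\ref{rel2}) are just $1-b^*b$ and $1-bb^*$, and (\ref{rel2}) asserts precisely that each of these annihilates $d$ and $d^*$ from either side. Thus the first thing I would record is $d=b^*b\,d=d\,b^*b=bb^*\,d=d\,bb^*$, together with the four adjoint identities for $d^*$. This is the engine that drives everything. Claim (2) is then immediate, since $bc=b+bb^*d=b+d=a$ by $bb^*d=d$.

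The crux is claim (1), and I would expect the equality $cc^*=1$ to be the main obstacle. For $c^*c$ I would expand $c^*c=1+b^*d+d^*b+d^*bb^*d$ and use $bb^*d=d$ to collapse the quartic term $d^*bb^*d$ into $d^*d$; the remaining quadratic expression $b^*d+d^*b+d^*d$ vanishes by a one-line computation using $a^*a=b^*b$. The direction $cc^*=1$ is harder because the analogous quartic term $b^*dd^*b$ does \emph{not} collapse through a single one-sided relation. Here I would expand $cc^*-1$ fully and regroup it as $b^*a(1-b^*b)+(1-b^*b)a^*b-2\,b^*b(1-b^*b)$, and then feed in the two genuinely two-sided consequences of (\ref{rel2}), namely $a(1-b^*b)=b(1-b^*b)$ (i.e.\ $d(1-b^*b)=0$) and $(1-b^*b)a^*=(1-b^*b)b^*$ (i.e.\ $(1-b^*b)d^*=0$); these turn each of the first two summands into $b^*b(1-b^*b)$, and the three terms cancel. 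The second unitary $1+(a-b)b^*$ needs no separate computation: the pair $(a^*,b^*)$ is again balanced, so applying the part just proved to it shows $c(a^*,b^*)=1+b(a^*-b^*)$ is unitary, and $1+(a-b)b^*$ is exactly its adjoint.

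For claim (3) I would compute $c\,b^*b$ and $b^*b\,c$ separately. Using $d\,b^*b=d$ gives $c\,b^*b=b^*b+b^*d$, while using $bb^*d=d$ gives $b^*b\,c=b^*b+b^*(bb^*d)=b^*b+b^*d$, so the two coincide; note that this algebraic step is independent of unitarity. Taking adjoints and using that $b^*b$ is self-adjoint gives commutation with $c^*$, and since $c$ is normal (by claim (1)) the commutation extends to $f(c)$ for any continuous $f$ by approximating $f$ uniformly on the spectrum by polynomials in $c$ and $c^*$.

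Finally, for claim (4) I would compute $(1-b^*b)(c-1)=(1-b^*b)b^*d=b^*d-b^*(bb^*d)=0$ and $(c-1)(1-b^*b)=b^*d\,(1-b^*b)=0$, the two using $bb^*d=d$ and $d\,b^*b=d$ respectively. For the statement about $g(c)$ with $g(1)=0$: since $1-b^*b$ commutes with $c$ (claim (3)) and kills $c-1$, it kills $c^n-1$ for every $n\in\mathbb{Z}$ (with $c^{-1}=c^*$), hence every Laurent polynomial in $c$ vanishing at $1$; I would then approximate $g$ uniformly on the spectrum (a subset of the unit circle) by such Laurent polynomials and pass to the norm limit in the continuous functional calculus to conclude $(1-b^*b)g(c)=0$.
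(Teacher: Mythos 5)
Your proof is correct and follows essentially the same route as the paper's: a direct algebraic verification of all four claims from the relations (\ref{rel1})--(\ref{rel2}), with your annihilation identities $d=b^*b\,d=d\,b^*b=bb^*d=d\,bb^*$ for $d=a-b$ being exactly the form in which the paper uses (\ref{rel2}), and your regrouping for $cc^*=1$ matching what the paper leaves as ``similarly.'' The only notable difference is that you get the unitarity of $1+(a-b)b^*$ by applying the first part to the balanced pair $(a^*,b^*)$ and taking adjoints, where the paper redoes the computation; both work.
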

\begin{proof}
(1) $c^*c=(1+(a-b)^*b)(1+b^*(a-b))=1+a^*b-b^*b+b^*a-b^*b+(a-b)^*bb^*(a-b)=1+a^*b+b^*a-2b^*b+(a-b)^*(bb^*-1)(a-b)+(a-b)^*(a-b)=
1+a^*b+b^*a-2b^*b+(a-b)^*(a-b)=1+a^*b+b^*a-2b^*b+a^*a+b^*b-a^*b-b^*a=1$, similarly one gets $cc^*=1$. The case of $1+(a-b)b^*$ can be checked in the same way.

(2)
$bc=b(1+b^*(a-b))=b+bb^*(a-b)=b-(1-bb^*)(a-b)+(a-b)=b+(a-b)=a$;

(3)
$b^*bc=b^*b+b^*bb^*a-(b^*b)^2$; $cb^*b=b^*b+b^*ab^*b-(b^*b)^2$, so it remains to show that $b^*bb^*a=b^*ab^*b$, which holds true: $b^*(bb^*)a=b^*(aa^*)a=b^*a(a^*a)=b^*a(b^*b)$. Also $b^*bc^*=(cb^*b)^*=(b^*bc)^*=c^*b^*b$.

(4)
$(1-b^*b)(c-1)=(1-b^*b)b^*(a-b)=b^*(1-bb^*)(a-b)=0$; $(c-1)(1-b^*b)=b^*(a-b)(1-b^*b)=0$.

\end{proof}

\begin{thm}\label{Thm}
The map $\iota:K_1(A)\to L_1(A)$ is a natural isomorphism for any unital $C^*$-algebra $A$.

\end{thm}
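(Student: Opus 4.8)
The plan is to construct an explicit inverse homomorphism $\kappa\colon L_1(A)\to K_1(A)$ out of the unitary $c=c(a,b)=1+b^*(a-b)$ of the previous lemma, and to show that $\kappa$ and $\iota$ are mutually inverse. Set $\kappa([(a,b)])=[c(a,b)]$. Since $c(a,b)$ is a unitary depending norm-continuously on $(a,b)$, a balanced homotopy $(a_t,b_t)$ produces a path of unitaries $c(a_t,b_t)$ and hence a fixed class in $K_1(A)$; because $c(a\oplus a',b\oplus b')=c(a,b)\oplus c(a',b')$ the assignment is additive, and $c(0,0)=1$ sends homotopy-trivial pairs to $0$. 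Thus $\kappa$ is a well-defined homomorphism. As $c(u,1)=1+(u-1)=u$, we get $\kappa\circ\iota=\id_{K_1(A)}$ at once, so $\iota$ is injective and split. Everything then reduces to surjectivity of $\iota$, i.e. to the identity $\iota\circ\kappa=\id_{L_1(A)}$, which amounts to showing that every balanced pair $(a,b)$ is equivalent to $(c(a,b),1)=\iota([c])$.

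I would first dispose of the case of a pair $(u,v)$ of unitaries, where the argument is clean; here $c(u,v)=v^*u$. Choosing the standard null-homotopy of $v^*\oplus v$ through unitaries in $M_2(A)$, the Whitehead rotation built from (\ref{U}), say $R_t$ with $R_0=1$ and $R_1=v^*\oplus v$, and right-multiplying the stabilized pair $(u\oplus 1,\,v\oplus 1)$ by $R_t$, one obtains a homotopy through pairs of unitaries --- automatically balanced --- from $(u,v)\oplus(1,1)$ to $(uv^*\oplus v,\,1\oplus v)=(uv^*,1)\oplus(v,v)$. Since $(v,v)$ is homotopy trivial by Lemma \ref{L1}, this shows $[(u,v)]=[(uv^*,1)]=\iota([uv^*])$, consistent with $\kappa$ because $uv^*$ and $v^*u$ are conjugate.

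The heart of the proof is to reduce an arbitrary balanced pair to a pair of unitaries. Writing $a=bc$ by part (2) of the lemma, I observe that the stabilized pair may be presented as $(a,b)\oplus(1,1)=(XW,X)$ with $X=b\oplus 1$ and $W=c\oplus 1$, while the pair $(\hat bW,\hat b)$, where $\hat b=\left(\begin{smallmatrix} b & (1-bb^*)^{1/2}\\ (1-b^*b)^{1/2} & -b^*\end{smallmatrix}\right)$ is the Halmos unitary dilation of $b$, is a pair of unitaries of the same shape with the same $W$. I would then connect $X=b\oplus 1$ to $\hat b$ through a path $X_\theta$ of contractions keeping $(X_\theta W,X_\theta)$ balanced, i.e. maintaining the two conditions $[X_\theta^*X_\theta,W]=0$ and $X_\theta(1-X_\theta^*X_\theta)(W-1)=0$ that characterize balance for pairs of the form $(XW,X)$ with $W$ unitary. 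The identities of the preceding lemma --- that $b^*b$ commutes with $c$ and $(1-b^*b)(c-1)=0$, together with their companions for $c'=1+(a-b)b^*$ (namely $bb^*$ commuting with $c'$, $(1-bb^*)(c'-1)=0$, and the dual factorization $a=c'b$) --- are exactly what makes the two endpoints satisfy these conditions and should drive the construction of $X_\theta$. Granting such a path, the previous paragraph gives $[(\hat bW,\hat b)]=\iota([\hat bW\hat b^*])=\iota([c])$, and since $[(XW,X)]=[(a,b)]$ this yields $[(a,b)]=\iota([c])=\iota\kappa([(a,b)])$, completing surjectivity.

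Finally, naturality is routine: a unital $*$-homomorphism $\varphi\colon A\to B$ carries balanced pairs to balanced pairs and satisfies $\varphi(c(a,b))=c(\varphi(a),\varphi(b))$ with $\varphi(u)$ again unitary, so $\varphi_*\circ\iota_A=\iota_B\circ\varphi_*$ on $K_1$. The main obstacle is the balanced homotopy $X_\theta$ of the third paragraph: the available commutation is asymmetric --- $b^*b$ pairs with $c$ and $bb^*$ with $c'$ --- so the obvious candidates fail. Indeed, already in $M_2(\mathbb C)$ one checks that neither the linear path $X_\theta=(1-\theta)X+\theta(1\oplus 1)$ nor the scaling $X_\theta=(1-\theta)X$ preserves $[X_\theta^*X_\theta,W]=0$, so the correct interpolation between the trivial dilation $b\oplus 1$ and the Halmos dilation $\hat b$ must be chosen with care to respect these relations.
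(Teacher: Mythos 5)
Your overall architecture matches the paper's: the same inverse $\kappa([(a,b)])=[c(a,b)]$, the same trivial verification that $\kappa\circ\iota=\id_{K_1(A)}$ via $c(u,1)=u$, and the same reduction of everything to the single identity $[(a,b)]=[(c(a,b),1)]$ in $L_1(A)$. The well-definedness of $\kappa$ and the naturality discussion are fine. But the heart of the argument --- the homotopy realizing $[(a,b)]=[(c,1)]$ --- is not actually supplied. Your plan is to present the stabilized pair as $(XW,X)$ with $X=b\oplus 1$, $W=c\oplus 1$, and to deform $X$ to the Halmos dilation $\hat b$ through contractions $X_\theta$ keeping $(X_\theta W,X_\theta)$ balanced, then invoke the unitary case. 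You then concede that the obvious interpolations (linear path, scaling) violate the balance conditions already in $M_2(\mathbb C)$, and no corrected path is exhibited. That is a genuine gap, not a routine verification: the conditions $[X_\theta^*X_\theta,W]=0$ and $X_\theta(1-X_\theta^*X_\theta)(W-1)=0$ are rigid, and it is not clear that any path from $b\oplus 1$ to $\hat b$ satisfying them exists. Without it, surjectivity of $\iota$ is unproved.

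The paper's proof sidesteps the dilation entirely, and the device is worth internalizing because it recurs throughout the paper. One shows the pair $\bigl(\left(\begin{smallmatrix}c&0\\0&b\end{smallmatrix}\right),\left(\begin{smallmatrix}1&0\\0&a\end{smallmatrix}\right)\bigr)$ is homotopy trivial (this encodes $[(c,1)]-[(a,b)]=0$, using $[(b,a)]=-[(a,b)]$). Keep the first member $A=c\oplus b$ \emph{fixed} and rotate only inside the second: $B_t=\left(\begin{smallmatrix}1&0\\0&b\end{smallmatrix}\right)U_t^*\left(\begin{smallmatrix}1&0\\0&c\end{smallmatrix}\right)U_t$. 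Since $bc=a$, one gets $B_0=1\oplus a$ and $B_{\pi/2}=c\oplus b=A$, and the pair $(A,A)$ is trivial by Lemma \ref{L1}. Balancedness of $(A,B_t)$ for all $t$ is exactly where the auxiliary identities $(1-b^*b)(c-1)=0$ and $[b^*b,c]=0$ enter: writing $U_t^*(1\oplus c)U_t=1+R_t(c-1)$, the defect $1-B_t^*B_t$ collapses to $0\oplus(1-b^*b)=1-A^*A$ because $(c-1)$ annihilates $1-b^*b$ on both sides. So the rotation passes $c$ from one diagonal slot to the other \emph{without} ever needing $a$, $b$, or $c$ to be unitarily dilated. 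If you want to salvage your route, you would need to either construct the missing $X_\theta$ explicitly or replace that step with a rotation of this kind; as written, the proof is incomplete.
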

\begin{proof}
For a balanced pair $(a,b)$, set $\kappa(a,b)=c(a,b)$. Then it gives rise to a homomorphism $\kappa:L_1(A)\to K_1(A)$. We shall show that $\kappa$ is the inverse map for $\iota$.

Let $u$ be a unitary. Then $c(u,1)=1+(u-1)=u$, hence $\kappa\circ\iota([u])=[c(u,1)]=[u]$, so $\kappa\circ\iota=\id_{K_1(A)}$. 

Let $(a,b)$ be a balanced pair. Then $\iota\circ\kappa([(a,b)])=[(c(a,b),1)]$. We have to check that $[(c(a,b),1)]=[(a,b)]$. Equivalently, we check that the pair $\left(\left(\begin{smallmatrix}c&\\0&b\end{smallmatrix}\right),\left(\begin{smallmatrix}1&\\&a\end{smallmatrix}\right)\right)$ is homotopy trivial, where $c=c(a,b)$, $a=bc$.

Set $A=\left(\begin{smallmatrix}c&0\\0&b\end{smallmatrix}\right)$, $B_t=\left(\begin{smallmatrix}1&0\\0&b\end{smallmatrix}\right)U_t^*\left(\begin{smallmatrix}1&0\\0&c\end{smallmatrix}\right)U_t$, where $U_t$ is given by (\ref{U}).

As $B_0=\left(\begin{smallmatrix}1&0\\0&a\end{smallmatrix}\right)$, $B_{\pi/2}=A$, and as the pair $(A,A)$ is trivial, so it remains to check that the pair $(A,B_t)$ is balanced for any $t$. 

\begin{eqnarray*}
B_t^*B_t&=&U_t^*\left(\begin{matrix}1&0\\0&c^*\end{matrix}\right)U_t\left(\begin{matrix}1&0\\0&b^*b\end{matrix}\right)U_t^*
\left(\begin{matrix}1&0\\0&c\end{matrix}\right)U_t\\
&=&U_t^*\left(\begin{matrix}1&0\\0&c^*\end{matrix}\right)U_t\left(\left(\begin{matrix}1&0\\0&1\end{matrix}\right)-\left(\begin{matrix}0&0\\0&1-b^*b\end{matrix}\right)\right)U_t^*
\left(\begin{matrix}1&0\\0&c\end{matrix}\right)U_t\\
&=&\left(\begin{matrix}1&0\\0&1\end{matrix}\right)-U_t^*\left(\begin{matrix}1&0\\0&c^*\end{matrix}\right)U_t\left(\begin{matrix}0&0\\0&1-b^*b\end{matrix}\right)U_t^*
\left(\begin{matrix}1&0\\0&c\end{matrix}\right)U_t.
\end{eqnarray*}

Note that 
$$
C=
%\left(\begin{matrix}0&0\\0&1-b^*b\end{matrix}\right)
U_t^*\left(\begin{matrix}1&0\\0&c\end{matrix}\right)U_t=\left(\begin{matrix}1&0\\0&1\end{matrix}\right)+
\left(\begin{matrix}\sin^2t&\sin t\cos t\\\sin t\cos t&\cos^2 t\end{matrix}\right)(c-1),
$$
and, as $(1-b^*b)(c-1)=0$, so we have 
$$
C^*\left(\begin{matrix}0&0\\0&1-b^*b\end{matrix}\right)C=\left(\begin{matrix}0&0\\0&1-b^*b\end{matrix}\right),
$$
so 
$$
B_t^*B_t=\left(\begin{matrix}1&0\\0&1\end{matrix}\right)-\left(\begin{matrix}0&0\\0&1-b^*b\end{matrix}\right)=
\left(\begin{matrix}1&0\\0&b^*b\end{matrix}\right)=A^*A.
$$
The other relations are checked in the same way.

\end{proof}

%\begin{lem}\label{estim2}
%Let $a=\left(\begin{smallmatrix}a_{11}&a_{12}\\a_{21}&a_{22}\end{smallmatrix}\right)$, %$b=\left(\begin{smallmatrix}b_{11}&b_{12}\\b_{21}&b_{22}\end{smallmatrix}\right)$. Assume that %$\|a_{ij}-b_{ij}\|<\varepsilon$ for $(i,j)\neq (1,1)$ and that the pair $(a,b)$ is balanced. Then %$\|(a_{11}-b_{11})b^*_{12}\|<2\varepsilon$, $\|(a_{11}-b_{11})b^*_{21}\|<2\varepsilon$.
%
%\end{lem}
%\begin{proof}\label{estimate2}
%Let us denote by $\ast$ elements with norm smaller than $\varepsilon$. As 
%$$
%1+(a-b)b^*=1+\left(\left(\begin{matrix}a_{11}&a_{12}\\a_{21}&a_{22}\end{matrix}\right)-\left(\begin{matrix}b_{11}&b_{12}\\b_{2%1}&b_{22}\end{matrix}\right)\right)\left(\begin{matrix}b_{11}^*&b_{21}^*\\b_{12}^*&b_{22}^*\end{matrix}\right)=\left(\begin{ma%trix}1+(a_{11}-b_{11})b_{11}^*+\ast&\ast\\(a_{11}-b_{11})b_{12}^*+\ast&1+\ast\end{matrix}\right),
%$$
%is unitary, so we have $\|(a_{11}-b_{11})b^*_{12}\|<2\varepsilon$. The second estimate can be obtained similarly from %unitarity of $1-b^*(a-b)$.
%
%\end{proof}

\section{Nonunital case}

\begin{lem}
Let $A$ be a non-unital $C^*$-algebra, $A^+$ its unitalization. Then the inclusion $A\subset A^+$ induces an isomorphism $L_1(A)\to L_1(A^+)$.

\end{lem}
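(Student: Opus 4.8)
The plan is to factor the map $j_*\colon L_1(A)\to L_1(A^+)$ through the isomorphism of Theorem \ref{Thm} and the standard identification $K_1(A)=\ker\bigl(K_1(A^+)\to K_1(\mathbb{C})\bigr)=K_1(A^+)$ (using $K_1(\mathbb{C})=0$). For a balanced pair $(a,b)$ over $M_n(A)$ the unitary $c(a,b)=1+b^*(a-b)$ satisfies $c(a,b)-1=b^*(a-b)\in M_n(A)$, so $c(a,b)$ represents a class in $K_1(A)\subset K_1(A^+)$. Thus $\kappa_A([(a,b)]):=[c(a,b)]$ defines a homomorphism $\kappa_A\colon L_1(A)\to K_1(A)$ (the computations of the preceding lemma and of Theorem \ref{Thm} are algebraic and never use the unit), and by construction the square
\[
\begin{CD}
L_1(A) @>{j_*}>> L_1(A^+)\\
@V{\kappa_A}VV @VV{\kappa_{A^+}}V\\
K_1(A) @= K_1(A^+)
\end{CD}
\]
commutes. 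Since $\kappa_{A^+}$ is an isomorphism by Theorem \ref{Thm} and the bottom row is the identity, $j_*$ is an isomorphism if and only if $\kappa_A$ is. So I would reduce the whole lemma to showing that $\kappa_A$ is an isomorphism, i.e.\ to reproving Theorem \ref{Thm} in the non-unital setting.

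The heart of the matter is to construct an inverse $\iota_A\colon K_1(A)\to L_1(A)$, that is, to realize a class in $K_1(A)$ by an honest balanced pair with both entries in $M_n(A)$. Given $[u]\in K_1(A)$ with $u=1+d$, $d\in M_n(A)$, I would first replace $u$, up to homotopy rel $M_n(A)$, by a unitary for which $d=u-1$ has a local unit: an element $e\in M_n(A)$ with $0\le e\le 1$ and $ed=de=d$. Such a reduction is available from a quasicentral approximate unit for $M_n(A)$ (equivalently, by pushing $d$ into the Pedersen minimal dense ideal), and it is harmless because $K_1$ is homotopy invariant. Given such $e$, set $b=e^{1/2}\in M_n(A)$ and $a=bu$. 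Then $(1-e)d=0$ and $e$ commutes with $d$ (both follow from $ed=de=d$), so the four relations in (\ref{rel1}) for $(a,b)$ reduce, exactly as in the preceding lemma, to $(1-b^*b)(u-1)=0$ and $[b^*b,u]=0$; hence $(a,b)$ is balanced over $M_n(A)$, and a one-line computation gives $c(a,b)=1+e\,d=u$. Setting $\iota_A([u]):=[(bu,b)]$ therefore yields $\kappa_A\circ\iota_A=\id_{K_1(A)}$; well-definedness (independence of the representative $u$ and of the local unit $e$) follows from the homotopy invariance built into $L_1$ together with the connectedness arguments already used for the group structure of $L_1$.

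It remains to check $\iota_A\circ\kappa_A=\id_{L_1(A)}$, i.e.\ that an arbitrary balanced pair $(a,b)$ over $M_n(A)$ is equivalent to $(e^{1/2}c,e^{1/2})$ with $c=c(a,b)$. Here I would use that $a=bc$ and that, by the preceding lemma, $b^*b$ commutes with $c$ and $(1-b^*b)(c-1)=0$; since $e$ is a local unit for $d=c-1$ in the same hereditary subalgebra, both $b^*b$ and $e$ act as the identity on the range of $c-1$ and commute with $c$. This lets me connect $b$ to $e^{1/2}$ through a path of contractions $b_s$ retaining these two properties, so that $(b_sc,b_s)$ is a homotopy of balanced pairs over $M_n(A)$ linking $(a,b)$ to $(e^{1/2}c,e^{1/2})=\iota_A(\kappa_A[(a,b)])$. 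All of this mirrors the rotation homotopies already used in the proof of Theorem \ref{Thm}.

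The step I expect to be the genuine obstacle is the realization in the second paragraph: because $A$ is non-unital one cannot in general find a contraction $e\in M_n(A)$ with $ed=d$ for the given $d=u-1$ (for instance in $C_0(\mathbb{R})$ with $d$ of full support no such $e$ exists), so the naive pair $(bu,b)$ is only \emph{approximately} balanced. Passing to the Pedersen ideal, equivalently exploiting a quasicentral approximate unit and correcting the resulting approximately balanced pair to an exactly balanced one, is precisely what upgrades the approximate relations to the exact identities in (\ref{rel1}); this is the only place where non-unitality really enters, everything else being a transcription of the unital arguments already given.
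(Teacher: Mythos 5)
Your overall architecture coincides with the paper's: the proof does reduce to showing $L_1(A)\cong K_1(A)$ directly, and the paper's representative for a class $[u]$, $u-1\in M_n(A)$, is exactly a pair of the shape you propose, namely $(b\,u',b)$ with $u'$ a unitary homotopic to $u$ and $b$ a positive contraction in $M_n(A)$ acting as an exact local unit on $u'-1$. However, the step you yourself flag as ``the genuine obstacle'' is the actual content of the lemma, and your proposed fix does not close it. A quasicentral approximate unit only yields $\|ed-d\|$ small, never $ed=d$; the Pedersen ideal is dense but $u-1$ need not lie in it; and there is no evident procedure for ``correcting an approximately balanced pair to an exactly balanced one'' --- the exact identities (\ref{rel1}) are rigid (e.g.\ they force $a^*a=b^*b$ on the nose), and nothing in the earlier lemmas produces an exactly balanced pair from an approximately balanced one. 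So as written, $\kappa_A\circ\iota_A=\id$ is not established.

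The paper's resolution is functional calculus on $u$ itself rather than an approximate unit of $A$: fix $\delta\in(0,1/3)$ and $f,g\in C(\mathbb S^1)$ with $|f|\equiv 1$, $f\equiv 1$ on $\{|z-1|<\delta\}$, $\|f-\mathrm{id}\|_\infty<\delta$, and $g(1)=0$, $|g|\le 1$, $g\equiv 1$ on $\{|z-1|>\delta\}$. Then $f(u)$ is a unitary homotopic to $u$ with $f(u)-1\in M_n(A)$, and $g(u)\in M_n(A)$ is an \emph{exact} local unit for $f(u)-1$ because the supports of $f-1$ and $1-g$ are disjoint; hence $(f(u)g(u),g(u))$ is exactly balanced with both entries in $M_n(A)$ and $c(f(u)g(u),g(u))=f(u)$. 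This is the concrete construction your second paragraph needs. Your third paragraph has a parallel gap: the existence of a path $b_s$ of contractions from $b$ to $e^{1/2}$ preserving $[b_s^*b_s,c]=0$ and $(1-b_s^*b_s)(c-1)=0$ is asserted but not constructed, and a linear path does not obviously preserve these relations; the paper instead avoids moving $b$ at all, using the $2\times2$ rotation homotopy $B_t=\bigl(\begin{smallmatrix}b&0\\0&g(c)\end{smallmatrix}\bigr)U_t^*\bigl(\begin{smallmatrix}f(c)&0\\0&1\end{smallmatrix}\bigr)U_t$ to show $[(bf(c),b)]=[(f(c)g(c),g(c))]$ and then running the homotopy through $c_t$ in the unitary variable only.
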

\begin{proof}
Let us prove surjectivity first. Since $L_1(A^+)\cong K_1(A^+)$, any element of $L_1(A^+)$ is, for some $n$, of the form $(u,1)$, where $u\in M_n(A^+)$ is unitary, $1=1_n\in M_n(A^+)$ is the unit, and without loss of generality we may assume that $u-1\in M_n(A)$. Set $A_n=M_n(A)$. Then $u,1\in A_n^+$ and $u-1\in A_n$. Fix $\delta\in(0,1/3)$ and let $f\in C(\mathbb S^1)$ be a continuous function with the following properties:
\begin{enumerate}
\item
$|f(z)|=1$ for any $z\in\mathbb S^1$;
\item
$f(z)=1$ for any $z\in\mathbb S^1$, for which $|z-1|<\delta$;
\item
$|f(z)-z|<\delta$ for any $z\in\mathbb S^1$.

\end{enumerate}

Then $f(u)\in A_n^+$ is unitary, $f(u)-1\in A_n$, and $\|f(u)-u\|<\delta$, hence $[f(u)]=[u]$ in $K_1(A^+)$ and $[(f(u),1)]=[(u,1)]$ in $L_1(A^+)$.

Let $g\in C(\mathbb S^1)$ satisfy the properties
\begin{enumerate}
\item
$g(z)=1$ for any $z\in\mathbb S^1$, for which $|z-1|>\delta$;
\item
$|g(z)|\leq 1$ for any $z\in\mathbb S^1$;
\item
$g(1)=0$,

\end{enumerate}
and let $g_t(z)=t+(1-t)g(z)$, $t\in[0,1]$. Then the pair $(f(u)g(u),g(u))$ is balanced for any $t\in[0,1]$ (all calculations are in $C(X)$: $|(fg)(z)|=|g(z)|=1$ when $|z-1|>\delta$, and $(fg)(z)=g(z)$ when $|z-1|<\delta$), hence $[(u,1)]=[(f(u)g(u),g(u))]$ in $L_1(A^+)$. But $g(u)\in A_n$, so $[(f(u)g(u),g(u))]\in L_1(A)$.

Now let us prove injectivity. Take $[(a,b)]\in L_1(A)$. Let $c$ be as in Lemma \ref{c}, and set $\tilde a=bf(c)$. Then $[(a,b)]=[(\tilde a,b)]$. 

Set $A=\left(\begin{matrix}\tilde a&0\\0&g(c)\end{matrix}\right)$, $B_t=\left(\begin{matrix}b&0\\0&g(c)\end{matrix}\right)U_t^*\left(\begin{matrix}f(c)&0\\0&1\end{matrix}\right)U_t$, where $U_t$ is given by (\ref{U}). Direct calculation allows to check that
\begin{enumerate}
\item
the pair $(A,B_t)$ is balanced for any $t$; 
\item
$B_0=A$; $B_{\pi/2}=\left(\begin{matrix}b&0\\0&f(c)g(c)\end{matrix}\right)$;
\item
$B_t$ is a matrix with coefficients in $A$ (not in $A^+$). 
\end{enumerate}
The first claim can be proved as in Theorem \ref{Thm}, and the other claims are trivial.

It follows that $[(A,B_{\pi/2})]=0$ in $L_1(A)$. As $A=\tilde a\oplus g(c)$, $B_{\pi/2}=b\oplus f(c)g(c)$, so $[(\tilde a,b)]+[(g(c),f(c)g(c))]=0$, i.e. $[(\tilde a,b)]=[(f(c)g(c),g(c))]$ in $L_1(A)$.

Now suppose that $[c]=0$ in $K_1(A)\cong L_1(A^+)$. Then, passing to matrices of greater size if necessary, we can connect $c$ and 1 by a path of unitaries $c_t$, $t\in[0,1]$. The pair $(f(c_t)g(c_t),g(c_t))$ is balanced for each $t\in[0,1]$, and, by definition, $(fg)(1)=g(1)=0$, hence $[(a,b)]=[(0,0)]$, which proves injectivity.   

\end{proof}

\begin{cor}
The groups $K_1(A)$ and $L_1(A)$ are naturally isomorphic for any $C^*$-algebra $A$.

\end{cor}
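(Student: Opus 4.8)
The plan is to combine the two results already in hand: Theorem \ref{Thm}, which settles the unital case, and the preceding Lemma, which identifies $L_1(A)$ with $L_1(A^+)$ when $A$ is non-unital. If $A$ is unital there is nothing further to prove, since Theorem \ref{Thm} already produces the natural isomorphism $\iota\colon K_1(A)\to L_1(A)$ together with its naturality. So I would concentrate on the non-unital case and then address naturality uniformly.

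For non-unital $A$, recall that $K_1$ is defined through the unitalization, $K_1(A):=K_1(A^+)$; equivalently, since $K_1(\mathbb C)=0$, the inclusion $A\hookrightarrow A^+$ already induces an isomorphism $K_1(A)\cong K_1(A^+)$. Theorem \ref{Thm}, applied to the unital algebra $A^+$, gives an isomorphism $\iota\colon K_1(A^+)\to L_1(A^+)$, and the preceding Lemma gives an isomorphism $L_1(A)\to L_1(A^+)$ induced by the inclusion. I would then define the isomorphism $K_1(A)\cong L_1(A)$ as the composite
\[
K_1(A)\;\xrightarrow{\ \cong\ }\;K_1(A^+)\;\xrightarrow{\ \iota\ }\;L_1(A^+)\;\xleftarrow{\ \cong\ }\;L_1(A),
\]
reading the last arrow backwards through the Lemma. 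Being a composite of isomorphisms, it is an isomorphism.

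Naturality I would check by functoriality. A $*$-homomorphism $\varphi\colon A\to B$ unitalizes to a unital $*$-homomorphism $\varphi^+\colon A^+\to B^+$ compatible with the inclusions $A\hookrightarrow A^+$ and $B\hookrightarrow B^+$. The square for $\iota$ commutes by the naturality asserted in Theorem \ref{Thm} (applied to $\varphi^+$), the square for the inclusion-induced maps $L_1(A)\to L_1(A^+)$ and $L_1(B)\to L_1(B^+)$ commutes because these maps are themselves induced by inclusions and $L_1$ is a functor, and the square for $K_1(A)\to K_1(A^+)$ commutes for the same reason. Pasting the three squares shows the composite above is natural, and since the unital and non-unital constructions agree on unital algebras up to the canonical identifications, the resulting family of isomorphisms is natural across all $C^*$-algebras.

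The only point requiring attention --- and it is bookkeeping rather than a genuine obstacle --- is to keep the several inclusion-induced maps coherent across the functors $K_1(-)$, $K_1((-)^+)$ and $L_1(-)$, i.e.\ to verify that the identification $K_1(A)\cong K_1(A^+)$ used at the first step is exactly the inclusion-induced one, so that it commutes with $\varphi^+$. Everything else is a formal composition of previously established natural isomorphisms.
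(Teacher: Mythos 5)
Your proposal is correct and follows exactly the route the paper intends: the Corollary is stated without proof precisely because it is the immediate composite of Theorem \ref{Thm} (unital case) with the preceding Lemma identifying $L_1(A)$ with $L_1(A^+)$ via the inclusion, using the standard convention $K_1(A)=K_1(A^+)$. Your additional care about naturality and the coherence of the inclusion-induced maps is sound bookkeeping that the paper leaves implicit.
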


\section{Some examples}

Let us consider the case $A=\mathbb C$. It is easy to see that the following holds.

\begin{lem}
Let $(a,b)$ be a balanced pair of matrices, i.e. of automorphisms of a finitedimensional space $V$. There exists a decomposition $V=V_1\oplus V_2$, $V_2=V_1^\perp$, such that $1-a^*a$ and $1-aa^*$ have the form $\left(\begin{smallmatrix}\ast&0\\0&0\end{smallmatrix}\right)$, and $a-b$ has the form $\left(\begin{smallmatrix}0&0\\0&\ast\end{smallmatrix}\right)$ with respect to this decomposition. 

\end{lem}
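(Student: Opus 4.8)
The plan is to read the decomposition straight off the defect operators. Since the pair is balanced, relation (\ref{rel1}) forces $a^*a=b^*b$ and $aa^*=bb^*$, so the four elements appearing in (\ref{rel2}) collapse to only two self-adjoint operators
\[
P=1-a^*a=1-b^*b,\qquad Q=1-aa^*=1-bb^*,
\]
both positive because $a,b$ are contractions. I would then set $V_1=\Im P+\Im Q$ and $V_2=V_1^\perp$. As $P$ and $Q$ are self-adjoint, passing to orthogonal complements identifies $V_2=\Ker P\cap\Ker Q$.

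With this choice the block forms of $P$ and $Q$ are immediate. The operator $P$ annihilates $V_2\subseteq\Ker P$ and satisfies $P(V)\subseteq\Im P\subseteq V_1$, so it leaves $V_1$ invariant and kills $V_2$; hence $P=\left(\begin{smallmatrix}\ast&0\\0&0\end{smallmatrix}\right)$ with respect to $V=V_1\oplus V_2$, and the same reasoning applies verbatim to $Q$. This settles the claim for $1-a^*a$ and $1-aa^*$.

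For $a-b$ I would use (\ref{rel2}) in the collapsed form $(a-b)P=(a-b)Q=0$ together with the adjoint relations $(a^*-b^*)P=(a^*-b^*)Q=0$. The first pair shows that $a-b$ vanishes on $\Im P+\Im Q=V_1$, which kills the two left-hand blocks. The remaining point is that $a-b$ maps $V_2$ into itself, and this is exactly where the adjoint relations are needed: for $v\in V_2$ and arbitrary $w$ one computes $\langle (a-b)v,Pw\rangle=\langle v,(a^*-b^*)Pw\rangle=0$ and likewise with $Q$, so $(a-b)v\perp\Im P+\Im Q=V_1$, i.e. $(a-b)v\in V_2$. Thus the upper-right block vanishes too and $a-b=\left(\begin{smallmatrix}0&0\\0&\ast\end{smallmatrix}\right)$.

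The argument is essentially forced once the subspace $V_1=\Im P+\Im Q$ is identified, so I do not expect a serious obstacle; the only substantive point is recognizing that invariance of $V_2$ under $a-b$ has to be extracted from the adjoint relations $(a^*-b^*)P=(a^*-b^*)Q=0$ rather than from the relations on $a-b$ alone. Note that invertibility of $a$ and $b$ (their being automorphisms) plays no role in this structural statement; only the balanced relations and the contraction hypothesis are used.
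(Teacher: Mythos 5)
Your proof is correct. The paper offers no proof of this lemma (it is introduced with ``It is easy to see that the following holds''), and your argument --- taking $V_1=\Im(1-a^*a)+\Im(1-aa^*)$, reading off the block form of the defects from self-adjointness, and using the adjoint relations $(a^*-b^*)(1-a^*a)=(a^*-b^*)(1-aa^*)=0$ to see that $a-b$ maps $V_2$ into $V_2$ --- is exactly the natural way to fill in the omitted details.
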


But even in this case it is not so easy to describe all balanced pairs of numerical matrices. In general case, the structure of balanced pairs may be even more complicated. Here we give two examples for the case when $A$ is commutative.

\begin{example}\label{nontrivial}
Let $A=M_2(C(\mathbb S^1))$ be the algebra of $2{\times}2$-matrix-valued functions on a circle with the coordinate $t$, $t\in[0,\pi/2]$. Set $U(t)=\left(\begin{matrix}\cos t&-\sin t\\\sin t&\cos t\end{matrix}\right)$, $\alpha,\beta,\gamma:[0,\frac{\pi}{2}]\to\mathbb C$ with $\alpha(0)=\alpha(\frac{\pi}{2})=\beta(0)=\beta(\frac{\pi}{2})=\gamma(0)=\gamma(\frac{\pi}{2})=1$, $|\alpha(t)|=|\beta(t)|=1$, $|\gamma(t)|< 1$ for all $t\in(0,\frac{\pi}{2})$, 
$$
a(t)=U(t)^*\left(\begin{matrix}\alpha(t)&0\\0&\gamma(t)\end{matrix}\right)U(t), \qquad b(t)=U(t)^*\left(\begin{matrix}\beta(t)&0\\0&\gamma(t)\end{matrix}\right)U(t).
$$
The pair $(a,b)$ is balanced. Note that although $a(t)$ and $b(t)$ are diagonal at each $t$, $a$ and $b$ cannot be diagonalized as elements of $M_2(C(\mathbb S^1))$ (the eigenvectors of $a(t)$ and $b(t)$ cannot be continuous at $0$). 

\end{example}

\begin{example}
Let $X$ be a $2n$-dimensional manifold with boundary $\partial X=\mathbb S^{2n-1}$ and let $Y=X\cup \mathbb D^{2n}$ be $X$ glued with the $2n$-dimensional disc over the common boundary. Let $a,b:X\to \mathbb U_N$ be two maps into the unitary group of order $N$, such that $c=a|_{\partial X}=b|_{\partial X}:\mathbb S^{2n-1}\to\mathbb U_N$ represents a non-trivial element of $\pi_{2n-1}(\mathbb U_N)$. Then $c$ does not extend to a map $\mathbb D^{2n}\to\mathbb U_N$, but easily extends to a map $\bar c$ from $\mathbb D^{2n}$ to the set of $N$-dimensional matrices of norm $\leq 1$. Set 
$$
\bar a(x)=\left\lbrace \begin{array}{rl}a(x)&\mbox{if\ }x\in X;\\
\bar c(x)&\mbox{if\ }x\in\mathbb D^{2n}\end{array}\right.;
\qquad
\bar b(x)=\left\lbrace \begin{array}{rl}b(x)&\mbox{if\ }x\in X;\\
\bar c(x)&\mbox{if\ }x\in\mathbb D^{2n}.\end{array}\right.;
$$   
Then the pair $(\bar a,\bar b)$ is balanced in $M_N(C(Y))$.

\end{example}

\section{$\mathbb K$-balanced pairs of operators}

For an operator $A$ on a Hilbert space $H$, let $\dot A$ denote the class of $A$ in the Calkin algebra. 
Let $A$, $B$ be operators on a Hilbert space $H$. We call the pair $(A,B)$ balanced modulo compacts ($\mathbb K$-balanced) if the pair $(\dot A,\dot B)$ is balanced in the Calkin algebra. This means that the relations (\ref{rel1}) and (\ref{rel2}) hold modulo compacts. 

To study properties of $\mathbb K$-balanced pairs of operators we need the following corollaryof theKasparov's technical theorem \cite{KaspTT}. A set $X_1,\ldots,X_n$ of operators is symmetric if for every $i=1,\ldots,n$, $X_i^*$ is contained in this set.

\begin{lem}\label{Kasparov}
Let $\overline{X}=\{X_1,\ldots,X_n\}$ and $\overline{Y}=\{Y_1,\ldots,Y_m\}$ be two symmetric sets of contractions on a Hilbert space $H$. Suppose that $X_iY_j$ is compact for any $i$ and $j$. Then, for any $\varepsilon>0$, there exists a projection $P$ in $H$ such that
\begin{enumerate}
\item
$\|PX_i-X_i\|<\varepsilon$ for $i=1,\ldots,n$;
\item
$\|\dot P\dot Y_j\|<\varepsilon$ for $j=1,\ldots,m$.
\end{enumerate}

\end{lem}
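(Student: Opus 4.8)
The plan is to use Kasparov's technical theorem to produce a single positive contraction that separates the two symmetric sets, and then to convert it into an honest projection by means of spectral calculus, absorbing a compact remainder into a finite-rank correction.

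First I would form the $C^*$-subalgebras $A=C^*(Y_1,\ldots,Y_m)$ and $B=C^*(X_1,\ldots,X_n)$ of $B(H)$. Because the two sets are symmetric, these are genuinely the $C^*$-algebras generated by the listed contractions, and the hypothesis together with symmetry gives $Y_jX_i=(X_i^*Y_j^*)^*\in\K(H)$ as well as $X_iY_j\in\K(H)$; hence every product of a word in the $Y$'s with a word in the $X$'s is compact, so $A\cdot B\subseteq\K(H)$. Both $A$ and $B$ are finitely generated, hence separable and $\sigma$-unital, while $\K(H)$ is a $\sigma$-unital ideal in $B(H)=M(\K(H))$. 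Taking the deriving set to be $A$ itself (so that the derivation condition holds trivially), Kasparov's technical theorem \cite{KaspTT} produces $N\in B(H)$ with $0\le N\le 1$, $NA\subseteq\K(H)$ and $(1-N)B\subseteq\K(H)$; in particular $NY_j\in\K(H)$ and $(1-N)X_i\in\K(H)$ for all $i,j$. Only this two-algebra existence statement is needed, not the commutator conclusion.

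The main work is to replace $N$ by a projection while upgrading the mod-compact relation $(1-N)X_i\in\K(H)$ to the genuine norm bound in (1). Fix $s\in(0,\varepsilon/2)$ and set $P_s=\chi_{(s,1]}(N)$, a spectral projection of $N$. Two observations are crucial: first, $\|(1-P_s)N\|=\|\chi_{[0,s]}(N)N\|\le s$; second, writing $g(t)=\chi_{(s,1]}(t)/t$ as a bounded Borel function, one has $P_s=g(N)N$, so that $P_sY_j=g(N)(NY_j)$ is compact. To absorb the compact pieces $C_i:=(1-N)X_i$ in the decomposition $X_i=NX_i+C_i$, choose a finite-rank projection $F$ with $\|(1-F)C_i\|<\varepsilon/2$ for all $i$, and put $P=P_s\vee F$. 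Since $F$ has finite rank, $P-P_s$ has finite rank, whence $\dot P=\dot P_s$ and $PY_j$ is compact, giving $\dot P\dot Y_j=0$, which is even stronger than (2). For (1), using $1-P=(1-P)(1-P_s)=(1-P)(1-F)$ one estimates $\|(1-P)X_i\|\le\|(1-P)NX_i\|+\|(1-P)C_i\|\le\|(1-P_s)N\|\,\|X_i\|+\|(1-F)C_i\|\le s+\varepsilon/2<\varepsilon$, and $\|PX_i-X_i\|=\|(1-P)X_i\|$.

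The step I expect to be the main obstacle is precisely this passage from $N$ to the projection $P$: the theorem yields only a positive element with no spectral gap and with mod-compact control, so it is the combination of the spectral projection $\chi_{(s,1]}(N)$ (which simultaneously keeps $P_sY_j$ compact and makes the $NX_i$-part small \emph{in norm}) with the finite-rank absorption of the remainders $C_i$ that does the real work. The one thing to verify carefully is that joining with $F$ disturbs neither conclusion, which rests on the elementary facts that $1-P$ lies below both $1-P_s$ and $1-F$ and that $P-P_s$ has finite rank.
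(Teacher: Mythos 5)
Your proof is correct, and it rests on the same pillar as the paper's --- Kasparov's technical theorem, a spectral projection of the resulting positive element, and a finite-rank correction --- but the execution in the middle is genuinely different. The paper works in the Calkin algebra: it gets $m$ with $m\dot X_i=\dot X_i$ and $m\dot Y_j=0$, lifts to $0\le M\le 1$, cuts the spectrum near $1$ (so that $Q=E_{(1-\varepsilon,1]}(M)\le M+\varepsilon$ controls $\|\dot Q\dot Y_j\|$), and then proves $QX_i-X_i$ compact by a $2\times2$ block argument using the invertibility of $1-M_2$ and the symmetry of $\overline{X}$, before adding the finite-rank piece. You instead keep $N$ in $B(H)$ and cut near $0$ at a level $s\in(0,\varepsilon/2)$, exploiting the two elementary identities $\|(1-P_s)N\|\le s$ and $P_s=g(N)N$ with $g$ bounded because the cut is bounded away from $0$: the first yields the norm bound on $(1-P_s)NX_i$ directly, and the second yields \emph{exact} compactness of $P_sY_j$, so your conclusion (2) comes out in the stronger form $\dot P\dot Y_j=0$. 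This bypasses the block-matrix step entirely and uses the symmetry hypotheses only to check $A\cdot B\subseteq\mathbb{K}(H)$ before invoking the technical theorem; the verification that joining with $F$ preserves both estimates ($1-P\le 1-P_s$, $1-P\le 1-F$, and $P-P_s$ finite rank) is sound. The one caveat, shared with the paper's own proof, is that the technical theorem as usually stated requires $\mathbb{K}(H)$ to be $\sigma$-unital, i.e.\ $H$ separable, which holds in all the intended applications.
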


\begin{proof}
Let $\mathcal A$ and $\mathcal B$ be the $C^*$-subalgebras in the Calkin algebra $\mathbb Q(H)$ generated by the sets $\dot X_1,\ldots,\dot X_n$ and $\dot Y_1,\ldots,\dot Y_m$ respectively. By the Kasparov's technical theorem, there exists $m\in\mathbb Q(H)$ such that $ma=a$, $mb=0$ for any $a\in\mathcal A$ and any $b\in\mathcal B$. Let $M\in\mathbb B(H)$ be a lift for $m$, i.e. an operator on $H$ such that $\dot M=m$. Without loss of generality we may assume that $M$ satisfies $0\leq M\leq 1$.

Let $Q=E_{(1-\varepsilon,1]}(M)$ be the spectral projection of $M$ corresponding to the set $(1-\varepsilon,1]$.  
As $Q\leq M+\varepsilon 1$, it follows from $\dot M\dot Y=0$ that $\|\dot Q\dot Y\|<\varepsilon$. The latter estimate will not change if we replace $Q$ by its compact perturbation $P$.

Let us write operators on $H$ as matrices with respect to the decomposition $H=QH\oplus(1-Q)H$. Then $M=\left(\begin{matrix}M_1&0\\0&M_2\end{matrix}\right)$ with $(1-\varepsilon)1\leq M_1\leq 1$ and $0\leq M_2\leq (1-\varepsilon)1$.

By Kasparov's technical theorem, $MX-X$ is compact for any $X$ from $\overline{X}$. Write $X$ as $X=\left(\begin{matrix}x_{11}&x_{12}\\x_{21}&x_{22}\end{matrix}\right)$. Then
$$
X-MX=\left(\begin{matrix}1-M_1&0\\0&1-M_2\end{matrix}\right)\left(\begin{matrix}x_{11}&x_{12}\\x_{21}&x_{22}\end{matrix}\right)
$$
is compact. Since $1-M_2$ is invertible, $x_{21}$ and $x_{22}$ are compact. It follows from symmetricity of $\overline{X}$ that $x_{12}$ is compact as well. So, 
$$
QX-X=-\left(\begin{matrix}0&x_{12}\\x_{21}&x_{22}\end{matrix}\right),
$$
therefore, for any $\varepsilon>0$ there is a projection $Q_0$ onto a finitedimensional subspace $H_0\subset (1-Q)H$ such that $\|Q_0x_{21}-x_{21}\|<\varepsilon/3$ and $\|Q_0x_{22}-x_{22}\|<\varepsilon/3$ for all $X$ from $\overline{X}$. Set 
$P=Q+Q_0$. Then $P$ is a compact perturbation of $Q$, and $\|PX-X\|<\varepsilon$ for each $X\in\overline{X}$.  

\end{proof}

\begin{thm}\label{H}
Let $(A,B)$ be a $\mathbb K$-balanced pair of operators on a Hilbert space $H$. For any $\varepsilon>0$ there exists a decomposition $H=H_1\oplus H_2$ with the following properties, where we write operators on $H$ as matrices with respect to this decomposition.
\begin{enumerate}
\item 
$\|A_{ij}-B_{ij}\|<\varepsilon$ for any $(i,j)\neq (1,1)$;
\item
$C_{ij}$ is of the form $D+K$ with $\|D\|<\varepsilon$ and $K$ compact for any $(i,j)\neq(2,2)$, where $C$ is one of the four operators: $1-A^*A$, $1-AA^*$, $1-B^*B$, $1-BB^*$.
\end{enumerate}

\end{thm}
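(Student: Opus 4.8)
The plan is to feed the two natural families of operators attached to $(A,B)$ into Lemma \ref{Kasparov} and then read off the block structure from its conclusions. Assume, as in the definition of a balanced pair, that $A$ and $B$ are contractions. I would take the symmetric set of differences $\overline X=\{\tfrac12(A-B),\tfrac12(A^*-B^*)\}$ and the symmetric set of defects $\overline Y=\{1-A^*A,\ 1-AA^*,\ 1-B^*B,\ 1-BB^*\}$; both consist of contractions. The hypothesis $X_iY_j\in\mathbb K$ for all $i,j$ is exactly the content of relations (\ref{rel2}) modulo compacts, which hold because $(A,B)$ is $\mathbb K$-balanced: each product $(A-B)C$ and $(A^*-B^*)C$, with $C$ one of the four defects, is compact.

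Applying Lemma \ref{Kasparov} (with $\varepsilon/2$ in the role of $\varepsilon$) produces a projection $P$ with $\|PX_i-X_i\|<\varepsilon/2$ and $\|\dot P\dot C\|<\varepsilon/2$ for each defect $C$. I would then set $H_1=PH$ and $H_2=(1-P)H$, taking the $2\times2$ block decomposition with respect to $P$. The first estimate applied to the two members of $\overline X$ gives $\|(1-P)(A-B)\|<\varepsilon$ and $\|(1-P)(A^*-B^*)\|<\varepsilon$; passing to adjoints in the latter yields $\|(A-B)(1-P)\|<\varepsilon$. Since $(A-B)_{21}=(1-P)(A-B)P$, $(A-B)_{12}=P(A-B)(1-P)$ and $(A-B)_{22}=(1-P)(A-B)(1-P)$, each is bounded in norm by one of the two quantities just controlled, which establishes (1).

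For (2), the estimate $\|\dot P\dot C\|<\varepsilon/2$ says $PC=D+K$ with $\|D\|<\varepsilon$ and $K$ compact, for each defect $C$; as $C=C^*$, taking adjoints also gives $CP=D'+K'$ of the same form. Writing $C_{11}=PCP$, $C_{12}=PC(1-P)$ and $C_{21}=(1-P)CP$ and substituting these decompositions shows that each of the three off-$(2,2)$ blocks is (small in norm) plus (compact), which is (2); the only block left uncontrolled is $C_{22}=(1-P)C(1-P)$, exactly as the statement permits.

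The substance of the argument is carried entirely by Lemma \ref{Kasparov}, hence by Kasparov's technical theorem; once the two symmetric families are identified, the remainder is bookkeeping. The step requiring the most care is converting the one-sided conclusions of the lemma --- $P$ nearly fixes $\overline X$ and nearly annihilates $\overline Y$, in both cases acting on the left --- into the two-sided block statements above. This is precisely why both $A-B$ and its adjoint are placed in $\overline X$ and why the self-adjointness of the defects is exploited. A minor preliminary point is the normalization to contractions, needed so that Lemma \ref{Kasparov} applies verbatim.
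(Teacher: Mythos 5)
Your proposal is correct and follows essentially the same route as the paper: apply Lemma \ref{Kasparov} to the set of differences $\{A-B,A^*-B^*\}$ and the set of defects $\{1-A^*A,1-AA^*,1-B^*B,1-BB^*\}$, then set $H_1=PH$, $H_2=(1-P)H$. The paper leaves the block-by-block bookkeeping (and the normalization to contractions) implicit; your write-up supplies those details correctly.
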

\begin{proof}
Apply Lemma \ref{Kasparov} to the sets $\overline{X}=\{A-B,A^*-B^*\}$ and $\overline{Y}=\{1-A^*A,1-AA^*,1-B^*B,1-BB^*\}$, and set $H_1=PH$, $H_2=(1-P)H$. 

\end{proof}

\begin{lem}
Let $(A,B)$ be a $\mathbb K$-balanced pair of operators on a Hilbert space $H=H_1\oplus H_2$, where $H_1$ and $H_2$ satisfy the conclusion of Theorem \ref{H} for some $\varepsilon$. Then 
\begin{enumerate}
\item
$\|\dot A_{11}^*\dot A_{11}-\dot B_{11}^*\dot B_{11}\|<2\varepsilon$, $\|\dot A_{11}\dot A_{11}^*-\dot B_{11}\dot B_{11}^*\|<2\varepsilon$;
\item
$\|(\dot B_{11}-\dot A_{11})(1-\dot A_{11}^*\dot A_{11})\|<4\varepsilon$, $\|(\dot B_{11}-\dot A_{11})^*(1-\dot A_{11}\dot A_{11}^*)\|<4\varepsilon$.
\end{enumerate} 

\end{lem}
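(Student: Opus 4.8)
The plan is to read off both claims from the block form of the exact relations (\ref{rel1}) and (\ref{rel2}) in the Calkin algebra, using Theorem \ref{H} only to control the off-diagonal blocks of $\dot A-\dot B$ and the non-$(2,2)$ blocks of the four defect operators. Throughout I work in the Calkin algebra, where $\dot A,\dot B$ are contractions, so every block $\dot A_{ij},\dot B_{ij}$ has norm at most $1$; by Theorem \ref{H}(1) the blocks $\dot A_{ij}-\dot B_{ij}$ with $(i,j)\neq(1,1)$ have norm $<\varepsilon$, and by Theorem \ref{H}(2) the blocks $(1-\dot A^*\dot A)_{ij}$, $(1-\dot A\dot A^*)_{ij}$, etc., with $(i,j)\neq(2,2)$ have norm $<\varepsilon$.

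For (1) I would use the exact equalities $\dot A^*\dot A=\dot B^*\dot B$ and $\dot A\dot A^*=\dot B\dot B^*$ from (\ref{rel1}). Taking the $(1,1)$-block of the first gives $\dot A_{11}^*\dot A_{11}+\dot A_{21}^*\dot A_{21}=\dot B_{11}^*\dot B_{11}+\dot B_{21}^*\dot B_{21}$, whence $\dot A_{11}^*\dot A_{11}-\dot B_{11}^*\dot B_{11}=\dot B_{21}^*\dot B_{21}-\dot A_{21}^*\dot A_{21}$. Writing the right-hand side as $\dot B_{21}^*(\dot B_{21}-\dot A_{21})+(\dot B_{21}-\dot A_{21})^*\dot A_{21}$ and using that $\dot A_{21},\dot B_{21}$ are contractions while $\|\dot A_{21}-\dot B_{21}\|<\varepsilon$, each summand is $<\varepsilon$, which gives the bound $2\varepsilon$. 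The second estimate is identical, starting from $\dot A\dot A^*=\dot B\dot B^*$ and using $\|\dot A_{12}-\dot B_{12}\|<\varepsilon$ instead.

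For (2) the main point is the identity $1-\dot A_{11}^*\dot A_{11}=(1-\dot A^*\dot A)_{11}+\dot A_{21}^*\dot A_{21}$, which splits the target into two pieces. Multiplying the first piece by $\dot B_{11}-\dot A_{11}$ is controlled by the exact relation $(\dot A-\dot B)(1-\dot A^*\dot A)=0$ from (\ref{rel2}): its $(1,1)$-block reads $(\dot B_{11}-\dot A_{11})(1-\dot A^*\dot A)_{11}=-(\dot B_{12}-\dot A_{12})(1-\dot A^*\dot A)_{21}$, a product of two blocks each of norm $<\varepsilon$, hence $<\varepsilon^2$. For the second piece I would rewrite $(\dot B-\dot A)\dot A^*=\dot B(\dot A-\dot B)^*$ (using $\dot A\dot A^*=\dot B\dot B^*$) and read off its $(1,2)$-block, which expresses $(\dot B_{11}-\dot A_{11})\dot A_{21}^*$ as a sum of three terms, each a contraction times an off-diagonal block of $\dot A-\dot B$, so $\|(\dot B_{11}-\dot A_{11})\dot A_{21}^*\|<3\varepsilon$; multiplying by the contraction $\dot A_{21}$ keeps this below $3\varepsilon$. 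Adding the two pieces gives the bound $3\varepsilon+\varepsilon^2$, which is $<4\varepsilon$ for $\varepsilon<1$ (and for $\varepsilon\geq1$ the left-hand side is trivially at most $2<4\varepsilon$). The adjoint estimate is proved the same way after interchanging the roles of $\dot A^*\dot A=\dot B^*\dot B$ and $\dot A\dot A^*=\dot B\dot B^*$ and using the relation $(\dot A^*-\dot B^*)(1-\dot A\dot A^*)=0$.

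The routine part is the block bookkeeping; the only genuine obstacle is the discrepancy term $\dot A_{21}^*\dot A_{21}$ between the compression $1-\dot A_{11}^*\dot A_{11}$ and the $(1,1)$-block of the global defect $1-\dot A^*\dot A$. This term is not small on its own, and the device that tames it is the intertwining identity $(\dot B-\dot A)\dot A^*=\dot B(\dot A-\dot B)^*$ coming from $\dot A\dot A^*=\dot B\dot B^*$; keeping the three resulting contributions each of size $\varepsilon$ is what forces the constant $4\varepsilon$ and is where care with the estimates is needed.
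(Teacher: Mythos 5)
Your proof is correct and follows essentially the same route as the paper: part (1) via the $(1,1)$-block of $\dot A^*\dot A=\dot B^*\dot B$, and part (2) by splitting $1-\dot A_{11}^*\dot A_{11}$ into the $(1,1)$-block of the global defect (controlled by the relation $(\dot A-\dot B)(1-\dot A^*\dot A)=0$) plus the corner term $\dot A_{21}^*\dot A_{21}$ (controlled, up to $3\varepsilon$, by the equality of the off-diagonal blocks of the balanced relations). The only cosmetic difference is that you extract the $3\varepsilon$ bound from the $(1,2)$-block of the identity $(\dot B-\dot A)\dot A^*=\dot B(\dot A-\dot B)^*$, whereas the paper reads it off the $(1,2)$-block of $\dot A^*\dot A=\dot B^*\dot B$ plus a triangle inequality; these are the same computation.
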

\begin{proof}
Both claims follow from $\dot A^*\dot A=\dot B^*\dot B$.

(1) As $(\dot A^*\dot A)_{11}=\dot A_{11}^*\dot A_{11}+\dot A_{21}^*\dot A_{21}$, $(\dot B^*\dot B)_{11}=\dot B_{11}^*\dot B_{11}+\dot B_{21}^*\dot B_{21}$, so $\|\dot A_{11}^*\dot A_{11}-\dot B_{11}^*\dot B_{11}\|=\|\dot A_{21}^*\dot A_{21}-\dot B_{21}^*\dot B_{21}\|<2\varepsilon$. The second estimate in (1) is proved similarly.

(2) As $(\dot A^*\dot A)_{12}=\dot A_{11}^*\dot A_{12}+\dot A_{12}^*\dot A_{22}$, $(\dot B^*\dot B)_{12}=\dot B_{11}^*\dot B_{12}+\dot B_{12}^*\dot B_{22}$, so $\|A_{11}^*\dot A_{12}-\dot B_{11}^*\dot B_{12}\|<2\varepsilon$. This implies that $\|(A_{11}^*-\dot B_{11}^*)\dot A_{12}\|<3\varepsilon$, hence $\|(A_{11}^*-\dot B_{11}^*)\dot A_{12}\dot A_{12}^*\|<3\varepsilon$.

As $(\dot B-\dot A)^*(1-\dot A\dot A^*)=0$, so $\|(\dot B_{11}-\dot A_{11})^*(1-\dot A\dot A^*)_{11}\|<\varepsilon$. Then
$\|(\dot B_{11}-\dot A_{11})^*(1-\dot A_{11}\dot A_{11}^*)\|<\|(\dot B_{11}-\dot A_{11})^*\dot A_{12}\dot A_{12}^*\|+\varepsilon<4\varepsilon$. 
The second estimate in (2) is proved similarly.

\end{proof}

\section{Relative index}

Let $(A,B)$ be a $\mathbb K$-balanced pair, and let the decomposition $H=H_1\oplus H_2$ satisfies the conclusion of Theorem \ref{H} for some $\varepsilon\in(0,1/20)$, i.e. 
\begin{enumerate}
\item 
$\|A_{ij}-B_{ij}\|<\varepsilon$ for any $(i,j)\neq (1,1)$;
\item
$C_{ij}$ is of the form $D+K$ with $\|D\|<\varepsilon$ and $K$ compact for any $(i,j)\neq(2,2)$, where $C$ is one of the four operators: $1-A^*A$, $1-AA^*$, $1-B^*B$, $1-BB^*$.
\end{enumerate}
In particular, this means that
\begin{enumerate}
\item
$\|A|_{H_2}-B|_{H_2}\|<\varepsilon$;
\item
$A|_{H_1}$ and $B|_{H_1}$ are isometries up to $\varepsilon$ modulo compacts, i.e. $\|\dot X^*\dot X-\dot 1_{H_1}\|<\varepsilon$, where $X$ is either $A|_{H_1}$ or $B|_{H_1}$.
\end{enumerate}

Then we can define a relative index of the pair $(A,B)$ as follows. Let us write operators as matrices with respect to the direct sum $H_1\oplus H_2$. Then $X|_{H_1}=\left(\begin{smallmatrix}X_1\\X_2\end{smallmatrix}\right)$. For convenience we write here $X_1$ instead of $X_{11}$ and $X_2$ instead of $X_{21}$.

Note that $A|_{H_1}$ and $B|_{H_1}$ behave like Fredholm operators, but their ranges may be completely different. To compare them, take one more operator $C=\left(\begin{smallmatrix}C_1\\C_2\end{smallmatrix}\right):H_1\to H$ with the following properties (where $X$ is either $A$ or $B$):
\begin{enumerate}
\item[(C1)]
$\|C_2-X_2\|<\varepsilon$;
\item[(C2)]
$\|\dot C_1^*\dot C_1-\dot X_1^*\dot X_1\|<2\varepsilon$; $\|\dot C_1\dot C_1^*-\dot X_1\dot X_1^*\|<2\varepsilon$;
\item[(C3)]
$\|(\dot C_1-\dot X_1)(1-\dot X_1^*\dot X_1)\|<4\varepsilon$ and $\|(\dot C_1-\dot X_1)^*(1-\dot X_1\dot X_1^*)\|<4\varepsilon$.
\end{enumerate}

Note that such operators $C$ exist. For example, one may take $C=A|_{H_1}$ or $C=B|_{H_1}$.

As $C^*=(C_1^*,C_2^*):H\to H_1$ has range $H_1$, so the compositions $C^*\circ A|_{H_1}$, $C^*\circ B|_{H_1}$ are operators on $H_1$. 

\begin{lem}\label{Lemma1F}
Operators $C^*\circ A|_{H_1}$ and $C^*\circ B|_{H_1}$ are Fredholm.

\end{lem}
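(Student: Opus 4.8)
The plan is to pass to the Calkin algebra $\mathbb Q(H_1)$ and prove that $\dot C^*\,\dot A|_{H_1}$ is invertible there, which is equivalent to $C^*\circ A|_{H_1}$ being Fredholm. Writing $A|_{H_1}=\left(\begin{smallmatrix}A_1\\A_2\end{smallmatrix}\right)$ and $C=\left(\begin{smallmatrix}C_1\\C_2\end{smallmatrix}\right)$, we have $C^*\circ A|_{H_1}=C_1^*A_1+C_2^*A_2$, so the task is to show that $\dot C_1^*\dot A_1+\dot C_2^*\dot A_2$ is invertible in $\mathbb Q(H_1)$.

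First I would reduce this product to the ``$c$'' of an approximate balanced pair. Using (C1) to replace $\dot C_2$ by $\dot A_2$ (an error $<\varepsilon$, as $\|\dot A_2\|\le 1$), then the second ``in particular'' property $\dot A_2^*\dot A_2=\dot A|_{H_1}^*\dot A|_{H_1}-\dot A_1^*\dot A_1\approx 1-\dot A_1^*\dot A_1$ (error $<\varepsilon$), and finally (C2) to replace $\dot A_1^*\dot A_1$ by $\dot C_1^*\dot C_1$ (error $<2\varepsilon$), one obtains
\[
\dot C_1^*\dot A_1+\dot C_2^*\dot A_2\;=\;1+\dot C_1^*(\dot A_1-\dot C_1)\;+\;R,\qquad \|R\|<4\varepsilon .
\]
The leading term is exactly $c=c(\dot A_1,\dot C_1)=1+\dot C_1^*(\dot A_1-\dot C_1)$ in the notation of (\ref{unitary}), formed from the pair $(\dot A_1,\dot C_1)$.

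Now the point of conditions (C2)--(C3) is that $(\dot A_1,\dot C_1)$ is a balanced pair in $\mathbb Q(H_1)$ up to $O(\varepsilon)$: (C2) gives $\dot A_1^*\dot A_1\approx\dot C_1^*\dot C_1$ and $\dot A_1\dot A_1^*\approx\dot C_1\dot C_1^*$, while (C3) gives the approximate versions of the relations (\ref{rel2}) with defects $1-\dot A_1^*\dot A_1$ and $1-\dot A_1\dot A_1^*$. Running the computation from the proof of the lemma establishing that $c(a,b)$ is unitary, but tracking the $O(\varepsilon)$ errors in place of the exact cancellations -- the only nontrivial one, $(\dot A_1-\dot C_1)^*(1-\dot C_1\dot C_1^*)(\dot A_1-\dot C_1)\approx 0$, follows from (C3) after replacing $1-\dot C_1\dot C_1^*$ by $1-\dot A_1\dot A_1^*$ via (C2) -- yields $\|c^*c-1\|=O(\varepsilon)$ and $\|cc^*-1\|=O(\varepsilon)$. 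For $\varepsilon\in(0,1/20)$ these are $<1$, so $c$ is invertible in $\mathbb Q(H_1)$; since $\|(\dot C_1^*\dot A_1+\dot C_2^*\dot A_2)-c\|=\|R\|<4\varepsilon$ is small relative to $\|c^{-1}\|^{-1}$, the product is invertible as well, and $C^*\circ A|_{H_1}$ is Fredholm. The identical argument with (C1)--(C3) read off for $X=B$ handles $C^*\circ B|_{H_1}$.

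The main obstacle is the middle step: the term $(\dot C_1^*-\dot A_1^*)\dot A_1$ is \emph{not} small, so $\dot C^*\dot A|_{H_1}$ is genuinely far from $1$ and can carry a nonzero index; invertibility cannot be deduced from closeness to the identity, but only from recognizing the product as the approximately unitary element $c$ attached to the balanced pair $(\dot A_1,\dot C_1)$. Making the constants in the approximate ``$c$ is unitary'' computation explicit, and verifying that they stay below $1$ for $\varepsilon<1/20$, is the part that needs care.
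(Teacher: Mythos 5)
Your proposal is correct and follows essentially the same route as the paper: pass to the Calkin algebra, use (C1) and the near-isometry of $A|_{H_1}$ to replace $C_2^*A_2$ by $1-\dot A_1^*\dot A_1$ up to a small error, and then recognize the result as an approximately unitary element built from the approximately balanced pair, so that it is invertible for $\varepsilon<1/20$. The paper works directly with $G=C_1^*A_1+A_2^*A_2$ and expands $\dot G^*\dot G$ (getting the slightly better bound $11\varepsilon$) rather than making the extra substitution to literally obtain $c(\dot A_1,\dot C_1)$, but this is only a cosmetic difference.
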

\begin{proof}
We have $F=C^*\circ A|_{H_1}=C_1^*A_1+C_2^*A_2$. Set $G=C_1^*A_1+A_2^*A_2$, then $\|F-G\|<\varepsilon$. We have $\dot G=\dot C_1^*\dot A_1+1-\dot A_1^*\dot A_1=1+(\dot C_1^*-\dot A_1^*)\dot A_1$. Then, using $\|\dot C_1(1-\dot A_1^*\dot A_1)-\dot A_1(1-\dot A_1^*\dot A_1)\|<4\varepsilon$ and $\|\dot C_1\dot C_1^*-\dot A_1\dot A_1^*\|<2\varepsilon$, we get 
%\begin{eqnarray*}
%\dot G^*\dot G&=&1+a^*(\dot C_1-a)+(\dot C_1^*-a^*)a+a^*(\dot C_1-a)(\dot C_1^*-a^*)a^*\\
%&=&1+a^*\dot C_1+\dot C_1^*a-2a^*a+a^*\dot C_1\dot C_1^*a-a^*a\dot C_1^*a-a^*\dot C_1a^*a+a^*aa^*a;
%\end{eqnarray*}
\begin{eqnarray*}
\|\dot G^*\dot G-1\|&=&\|\dot A_1^*\dot C_1+\dot C_1^*\dot A_1-2\dot A_1^*\dot A_1+\dot A_1^*\dot C_1\dot C_1^*\dot A_1-\dot A_1^*\dot A_1\dot C_1^*\dot A_1-\dot A_1^*\dot C_1\dot A_1^*\dot A_1+\dot A_1^*\dot A_1\dot A_1^*\dot A_1\|\\
&<&\|\dot A_1^*\dot C_1(1-\dot A_1^*\dot A_1)+(1-\dot A_1^*\dot A_1)\dot C_1^*\dot A_1-2\dot A_1^*\dot A_1+2\dot A_1^*\dot A_1\dot A_1^*\dot A_1\|+2\varepsilon\\
&<&\|\dot A_1^*\dot A_1(1-\dot A_1^*\dot A_1)+(1-\dot A_1^*\dot A_1)\dot A_1^*\dot A_1-2\dot A_1^*\dot A_1+2\dot A_1^*\dot A_1\dot A_1^*\dot A_1\|+10\varepsilon=10\varepsilon.
\end{eqnarray*}
Hence, $\|\dot F^*\dot F-1\|<11\varepsilon$. Similarly, $\|\dot F\dot F^*-1\|<11\varepsilon$, so $F$ is Fredholm. The same proof works for the second operator. 

\end{proof}    

Define $\ind(A,B)$ by $\ind(A,B)=\ind (C^*\circ A|_{H_1})-\ind (C^*\circ B|_{H_1})$.

\begin{lem}\label{Lemma2F}
$\ind(A,B)$ does not depend on $C$ when $C$ satisfies (C1)-(C3).

\end{lem}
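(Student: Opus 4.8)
The plan is to prove the stronger statement that for \emph{every} $C$ satisfying (C1)--(C3) one has the explicit formula
\[
\ind(A,B)=\ind\bigl(B|_{H_1}^*\circ A|_{H_1}\bigr),
\]
whose right-hand side does not involve $C$ at all; independence of $C$ is then immediate. The starting point is additivity of the Fredholm index under composition. Writing $F^X=C^*\circ X|_{H_1}$ for $X=A,B$, I would use $\ind(F^B)=-\ind\bigl((F^B)^*\bigr)$ together with $(F^B)^*=B|_{H_1}^*\circ C$ to rewrite
\[
\ind(A,B)=\ind(F^A)+\ind\bigl(B|_{H_1}^*\circ C\bigr)=\ind\Bigl((B|_{H_1}^*\circ C)\circ(C^*\circ A|_{H_1})\Bigr),
\]
a single Fredholm operator on $H_1$ whose image in the Calkin algebra is $\dot B|_{H_1}^*\,\dot C\dot C^*\,\dot A|_{H_1}$. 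Both factors are Fredholm by Lemma \ref{Lemma1F}, so additivity applies.

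The key geometric fact I would extract is that the range of $A|_{H_1}$ lies, modulo compacts and up to a small error, inside the range of $C$; quantitatively, $\|(1-\dot C\dot C^*)\dot A|_{H_1}\|$ is small. This is read off from the estimate established inside the proof of Lemma \ref{Lemma1F}: since $\|\dot F^{A\,*}\dot F^{A}-1\|=\|\dot A|_{H_1}^*\,\dot C\dot C^*\,\dot A|_{H_1}-1\|$ is of order $\varepsilon$, while $\dot A|_{H_1}^*\dot A|_{H_1}$ and $\dot C^*\dot C$ are each within order $\varepsilon$ of $1$ (the former because $A|_{H_1}$ is an isometry up to $\varepsilon$ modulo compacts, the latter from (C1)--(C2) combined with that same fact), the positive element $\dot A|_{H_1}^*(1-\dot C\dot C^*)\dot A|_{H_1}$ has small norm, whence so does $(1-\dot C\dot C^*)\dot A|_{H_1}$.

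With this in hand the conclusion is a perturbation argument. Since
\[
\bigl\|\dot B|_{H_1}^*\,\dot C\dot C^*\,\dot A|_{H_1}-\dot B|_{H_1}^*\dot A|_{H_1}\bigr\|\le\bigl\|(1-\dot C\dot C^*)\dot A|_{H_1}\bigr\|
\]
is small, and $\dot B|_{H_1}^*\dot A|_{H_1}$ is invertible in the Calkin algebra (Lemma \ref{Lemma1F} again, now exploiting that $C=A|_{H_1}$ is itself an admissible choice, so that $A|_{H_1}^*\circ B|_{H_1}$ and its adjoint are Fredholm), the two symbols are joined by a straight segment lying inside the invertibles. Hence the two operators have equal index, giving $\ind(A,B)=\ind\bigl(B|_{H_1}^*\circ A|_{H_1}\bigr)$ for every admissible $C$.

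The main obstacle I anticipate is purely quantitative: keeping the numerical constants consistent with the standing hypothesis $\varepsilon<1/20$. The chain of inequalities in Lemma \ref{Lemma1F} is lossy (it only delivers $\|\dot F^*\dot F-1\|<11\varepsilon$), and passing through a square root to bound $\|(1-\dot C\dot C^*)\dot A|_{H_1}\|$ introduces a factor of order $\sqrt\varepsilon$, which must be compared against the invertibility radius $\sqrt{1-O(\varepsilon)}$ of $\dot B|_{H_1}^*\dot A|_{H_1}$. Making the perturbation strictly smaller than this radius is exactly where the slack in $\varepsilon<1/20$ is spent, and it calls for a more careful accounting of constants than the crude triangle inequality; a secondary technical point is that $\dot C\dot C^*$ is only an approximate projection, so $(1-\dot C\dot C^*)^2$ must be replaced by $1-\dot C\dot C^*$ at the cost of a further term of order $\varepsilon$.
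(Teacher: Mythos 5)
Your route is genuinely different from the paper's, and the overall architecture is sound: the paper never tries to remove $C$ from the formula directly; instead it fixes the reference choice $C=A|_{H_1}$ and shows, via multiplicativity of the index, that the equality $\ind(C^*A|_{H_1})-\ind(C^*B|_{H_1})=\ind(A|_{H_1}^*A|_{H_1})-\ind(A|_{H_1}^*B|_{H_1})$ is equivalent to the equality of indices of the two \emph{products} $(C^*A|_{H_1})(A|_{H_1}^*B|_{H_1})$ and $(C^*B|_{H_1})(A|_{H_1}^*A|_{H_1})$, and then checks by a purely algebraic expansion that these two products agree up to $4\varepsilon$ in the Calkin algebra. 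Every perturbation in that argument is $O(\varepsilon)$. You instead collapse $\ind(A,B)$ into the single operator $B|_{H_1}^*CC^*A|_{H_1}$ (a correct use of $\ind(T^*)=-\ind(T)$ and multiplicativity) and then try to erase $\dot C\dot C^*$ by a norm perturbation, which forces you through the square-root trick: from $\|\dot A|_{H_1}^*(1-\dot C\dot C^*)\dot A|_{H_1}\|=O(\varepsilon)$ you can only conclude $\|(1-\dot C\dot C^*)\dot A|_{H_1}\|=O(\sqrt{\varepsilon})$. What your approach buys is a cleaner endpoint — you get the $C$-free formula $\ind(A,B)=\ind(B|_{H_1}^*A|_{H_1})$ in one stroke, which is essentially the paper's subsequent Corollary $\ind(A,B)=\ind(1+B_1^*(A_1-B_1))$ — but at the cost of degrading the error from $\varepsilon$ to $\sqrt{\varepsilon}$.

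That degradation is where the genuine gap sits, and I do not think "more careful accounting" can close it for the full stated range $\varepsilon\in(0,1/20)$. The bound $\|\dot F^*\dot F-1\|\lesssim 11\varepsilon$ for $F=C^*A|_{H_1}$ is essentially forced by the hypotheses: (C2) and (C3) contribute $2\varepsilon$ and $2\cdot 4\varepsilon$ respectively and are the \emph{defining} properties of $C$, so they cannot be sharpened. Hence your perturbation is of size roughly $\sqrt{12\varepsilon}$ (worse once you add the $O(\varepsilon)$ correction for $1-\dot C\dot C^*$ failing to be positive), while the invertibility radius of $\dot B|_{H_1}^*\dot A|_{H_1}$ in the Calkin algebra is only guaranteed to be $\sqrt{1-11\varepsilon}$. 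At $\varepsilon=1/20$ these numbers are about $0.77$ versus $0.67$, so the straight segment is not known to stay inside the invertibles; the inequality $\sqrt{12\varepsilon}<\sqrt{1-11\varepsilon}$ already requires $\varepsilon<1/23$, and with the positivity correction you need something closer to $\varepsilon<1/30$. Note also that (C2)--(C3) genuinely do not pin $C_1$ down in norm (when $A_1$ is an honest isometry modulo compacts, $C_1$ may be any approximate unitary), so there is no hidden $O(\varepsilon)$ bound on $\dot C_1-\dot A_1$ to fall back on; only the combination $\dot C\dot C^*\dot A|_{H_1}$ is controlled, and only at the $\sqrt{\varepsilon}$ level. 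Your argument is therefore a correct proof of the lemma for a smaller threshold on $\varepsilon$ (harmless for the paper's ultimate purposes, since Theorem \ref{H} supplies decompositions for arbitrarily small $\varepsilon$, but not for the lemma as literally stated). To prove it for the stated range you should switch, as the paper does, to comparing two products that differ by $O(\varepsilon)$ before any square roots are taken.
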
 
\begin{proof}
Without loss of generality we may take $C_2=A_2$. Then $(C^*\circ A|_{H_1})^\cdot=1+(\dot C_1^*-\dot A_1^*)\dot A_1$ and 
$\|(C^*\circ B|_{H_1})^\cdot-(1+\dot C_1^*\dot B_1-\dot A_1^*\dot A_1)\|<\varepsilon$. Let us check that 
$$
\ind (C^*\circ A|_{H_1})-\ind (C^*\circ B|_{H_1})=\ind ((A|_{H_1})^*\circ A|_{H_1})-\ind ((A|_{H_1})^*\circ B|_{H_1}).
$$
By multiplicativity of index, this is equivalent to 
$$
\ind(C^*\circ A|_{H_1})((A|_{H_1})^*\circ B|_{H_1})=\ind(C^*\circ B|_{H_1})((A|_{H_1})^*\circ A|_{H_1}),
$$
or, as $\|\dot A_1^*\dot A_1+\dot A_2^*\dot A_2-1\|<\varepsilon$, to 
\begin{equation}\label{2indices}
\ind(1+(C_1^*-A_1^*)A_1)(1+A_1^*(B_1-A_1))=\ind(1+C_1^*B_1-A_1^*A_1).
\end{equation}
Direct calculation shows that
$$
(1+(\dot C_1^*-\dot A_1^*)\dot A_1)(1+\dot A_1^*(\dot B_1-\dot A_1))=1+\dot C_1^*\dot A_1-\dot A_1^*\dot A_1+\dot A_1^*\dot B_1-\dot A_1^*\dot A_1+(\dot C_1^*-\dot A_1^*)\dot A_1\dot A_1^*(\dot B_1-\dot A_1); 
$$
\begin{eqnarray*}
&&\|(1+(\dot C_1^*-\dot A_1^*)\dot A_1)(1+\dot A_1^*(\dot B_1-\dot A_1))-(1+\dot C_1^*\dot B_1-\dot A_1^*\dot A_1)\|\\
&=&\|\dot C_1^*\dot A_1-2\dot A_1^*\dot A_1+\dot A_1^*\dot B_1+(\dot C_1^*-\dot A_1^*)\dot A_1\dot A_1^*(\dot B_1-\dot A_1)-\dot C_1^*\dot B_1+\dot A_1^*\dot A_1\|\\
&\leq&\|\dot C_1^*\dot A_1-2\dot A_1^*\dot A_1+\dot A_1^*\dot B_1+(\dot C_1^*-\dot A_1^*)(\dot B_1-\dot A_1)-\dot C_1^*\dot B_1+\dot A_1^*\dot A_1\|+4\varepsilon=4\varepsilon.
\end{eqnarray*}
Therefore, the operators in the both sides of (\ref{2indices}) coinside up to $4\varepsilon$ modulo compacts, thus they have the same index.

\end{proof}

\begin{cor}
$\ind(A,B)=\ind(1+B_1^*(A_1-B_1))$.

\end{cor}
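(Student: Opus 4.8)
The plan is to apply Lemma~\ref{Lemma2F} with the specific admissible comparison operator $C=B|_{H_1}$, whose admissibility (conditions (C1)--(C3)) is exactly the observation recorded just before Lemma~\ref{Lemma1F}. Since Lemma~\ref{Lemma2F} guarantees that $\ind(A,B)$ does not depend on the choice of $C$, it suffices to evaluate the two Fredholm indices $\ind(C^*\circ A|_{H_1})$ and $\ind(C^*\circ B|_{H_1})$ for this particular $C$ and to observe that the second one vanishes.

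First I would compute $C^*\circ B|_{H_1}=B_1^*B_1+B_2^*B_2=(B|_{H_1})^*(B|_{H_1})$. By the isometry-up-to-$\varepsilon$ property recorded at the beginning of the present section, one has $\|\dot B_1^*\dot B_1+\dot B_2^*\dot B_2-1\|<\varepsilon$, so $C^*\circ B|_{H_1}$ differs from the identity by less than $\varepsilon$ modulo compacts; hence $\ind(C^*\circ B|_{H_1})=0$.

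Next I would compute $C^*\circ A|_{H_1}=B_1^*A_1+B_2^*A_2$. The estimate $\|A_2-B_2\|<\varepsilon$ gives $\|B_2^*A_2-B_2^*B_2\|<\varepsilon$, and combining this with the relation $\dot B_2^*\dot B_2=1-\dot B_1^*\dot B_1$ valid up to $\varepsilon$ modulo compacts shows that $C^*\circ A|_{H_1}$ agrees with $1+B_1^*(A_1-B_1)$ up to $2\varepsilon$ modulo compacts. As $\varepsilon<1/20$, this perturbation is well within the radius on which the Fredholm index is locally constant, so $\ind(C^*\circ A|_{H_1})=\ind(1+B_1^*(A_1-B_1))$. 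Substituting both values into $\ind(A,B)=\ind(C^*\circ A|_{H_1})-\ind(C^*\circ B|_{H_1})$ yields the stated formula.

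The computations are routine; the only point requiring care is that the accumulated error (at most $2\varepsilon$) stay strictly below the threshold ensuring stability of the index under norm-small perturbations modulo compacts, which the standing hypothesis $\varepsilon\in(0,1/20)$ comfortably guarantees. There is no genuine obstacle here: the corollary is simply the specialization of Lemma~\ref{Lemma2F} to the symmetric choice $C=B|_{H_1}$, which forces $\ind(C^*\circ B|_{H_1})$ to vanish and leaves a single index.
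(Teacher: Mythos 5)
Your argument is correct and follows essentially the same route as the paper: both proofs specialize Lemma~\ref{Lemma2F} to a particular admissible comparison operator so that the second index vanishes and the first reduces, after an $O(\varepsilon)$ perturbation modulo compacts, to $\ind(1+B_1^*(A_1-B_1))$. The only difference is that you take $C=B|_{H_1}$ where the paper takes $C=A|_{H_1}$; your choice is in fact marginally more direct, since the paper's then has to pass through $\ind(1+A_1^*(B_1-A_1))$, apply $\ind(T)=-\ind(T^*)$, and replace $A_1^*A_1$ by $B_1^*B_1$, all of which your computation avoids.
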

\begin{proof}
Take $C_1=A_1$. Then 
\begin{eqnarray*}
\ind(A,B)&=&\ind((A|_{H_1})^*\circ A|_{H_1})-\ind((A|_{H_1})^*\circ B|_{H1})\\
&=&-\ind(1+A_1^*(B_1-A_1))=\ind(1+(B_1-A_1)^*A_1)\\
&=&\ind(1+B_1^*A_1-A_1^*A_1)=\ind(1+B_1^*A_1-B_1*B_1)\\
&=&\ind(1+B_1^*(A_1-B_1)).
\end{eqnarray*}

\end{proof}

Now let us show that $\ind(A,B)$ does not depend also on the decomposition $H_1\oplus H_2$. Let $F=c(A,B)=1+B^*(A-B)$ be the operator on $H\oplus H$ defined by the formula (\ref{unitary}).  Independence on $H_1$ is implied by the following theorem.

\begin{thm}
$\ind(A,B)=\ind(1+B^*(A-B))$.

\end{thm}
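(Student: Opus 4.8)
The plan is to lean on the Corollary, which already supplies $\ind(A,B)=\ind(1+B_1^*(A_1-B_1))$, and then to show that replacing the corner operator $1+B_1^*(A_1-B_1)$ on $H_1$ by the full operator $1+B^*(A-B)$ on $H$ does not change the Fredholm index. First I would record that $1+B^*(A-B)$ is Fredholm: since $(\dot A,\dot B)$ is balanced in the Calkin algebra, the Lemma asserting that $c(a,b)$ is unitary for a balanced pair, applied inside the Calkin algebra, shows that $\dot c=1+\dot B^*(\dot A-\dot B)$ is unitary, so $c=1+B^*(A-B)$ is Fredholm and its essential inverse $\dot c^{-1}=\dot c^*$ has norm $1$. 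It therefore suffices to prove $\ind(1+B^*(A-B))=\ind(1+B_1^*(A_1-B_1))$.

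Next I would write $A-B$ as a matrix with respect to $H=H_1\oplus H_2$. Property (1) of Theorem \ref{H} gives $\|(A-B)_{ij}\|<\varepsilon$ for every $(i,j)\neq(1,1)$, so $A-B$ differs from $\Delta_0=\left(\begin{smallmatrix}A_1-B_1&0\\0&0\end{smallmatrix}\right)$ by an operator of norm $<3\varepsilon$. Since $\dot B$ is a contraction, i.e. $\|\dot B\|\le 1$, the operator $1+B^*(A-B)$ differs, modulo essential norm $<3\varepsilon$, from the lower-triangular operator $T=1+B^*\Delta_0=\left(\begin{smallmatrix}1+B_1^*(A_1-B_1)&0\\B_{12}^*(A_1-B_1)&1\end{smallmatrix}\right)$.

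Two index computations then close the argument. For the first, the straight-line homotopy $s\mapsto 1+B^*(A-B)-sB^*(A-B-\Delta_0)$, $s\in[0,1]$, joins $1+B^*(A-B)$ to $T$ and perturbs $\dot c$ by a class of norm $\le 3s\varepsilon<1=\|\dot c^{-1}\|^{-1}$; hence every term of the homotopy is invertible in the Calkin algebra, so Fredholm, and by homotopy invariance $\ind(1+B^*(A-B))=\ind T$. For the second, $T$ is block lower-triangular with diagonal entries the Fredholm operator $1+B_1^*(A_1-B_1)$ (Fredholm by Lemma \ref{Lemma1F} and the Corollary) and $1_{H_2}$; scaling the corner $B_{12}^*(A_1-B_1)$ to zero is a norm-continuous path of Fredholm operators, whence $\ind T=\ind(1+B_1^*(A_1-B_1))+\ind 1_{H_2}=\ind(1+B_1^*(A_1-B_1))$. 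Combining both equalities with the Corollary yields $\ind(A,B)=\ind(1+B^*(A-B))$.

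The main obstacle is keeping the $\varepsilon$-estimates inside the index-stability radius: one must check that the essential norm of the perturbation $B^*(A-B-\Delta_0)$ stays strictly below $\|\dot c^{-1}\|^{-1}=1$, which is exactly where the standing hypothesis $\varepsilon<1/20$ is used, and one must verify that every operator on the two homotopy paths (the straight-line homotopy and the corner-scaling homotopy) is genuinely Fredholm, so that the index is locally constant and the endpoint values may legitimately be compared.
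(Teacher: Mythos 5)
Your proof is correct, and its skeleton is the same as the paper's: reduce to the Corollary $\ind(A,B)=\ind(1+B_1^*(A_1-B_1))$ by showing that $1+B^*(A-B)$ is an index-preserving perturbation of a block operator whose only interesting entry is $B_1^*(A_1-B_1)$. Where you genuinely diverge is in the treatment of the $(2,1)$ corner. The paper's one-line proof asserts $\bigl\|B^*(A-B)-\left(\begin{smallmatrix}B_1^*(A_1-B_1)&0\\0&0\end{smallmatrix}\right)\bigr\|<6\varepsilon$; three entries of that difference are immediately $O(\varepsilon)$ from Theorem \ref{H}(1) and $\|\dot B\|\le 1$, but the $(2,1)$ entry is $B_{12}^*(A_1-B_1)$ up to an $\varepsilon$-term, and there is no reason for $B_{12}$ to be small — bounding this entry needs an extra appeal to the balanced relations (e.g.\ $(A^*-B^*)B\equiv -A^*(A-B)$ modulo compacts, whose $(1,2)$ entry is $O(\varepsilon)$), and even then only in essential norm. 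You sidestep this entirely by keeping the corner: your $T=1+B^*\Delta_0$ is block lower-triangular with diagonal $\bigl(1+B_1^*(A_1-B_1),\,1\bigr)$, and the factorization $\left(\begin{smallmatrix}S&0\\sX&1\end{smallmatrix}\right)=\left(\begin{smallmatrix}S&0\\0&1\end{smallmatrix}\right)\left(\begin{smallmatrix}1&0\\sX&1\end{smallmatrix}\right)$, whose second factor is invertible, shows every operator on your corner-scaling path is Fredholm of index $\ind S$ — this is the one step you assert without justification, and you should record the factorization. Your approach buys a cleaner argument (the only estimate needed is $\|\dot B^*(\dot A-\dot B-\dot\Delta_0)\|<3\varepsilon<1=\|\dot c^{-1}\|^{-1}$, using that $\dot c$ is exactly unitary), while the paper's buys brevity at the cost of an unproved sub-estimate. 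Both land on the same conclusion via the Corollary.
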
 
\begin{proof}
The proof follows from the estimate
$$
\left\|B^*(A-B)-\left(\begin{matrix}B_1^*(A_1-B_1)&0\\0&0\end{matrix}\right)\right\|<6\varepsilon. 
$$

\end{proof}

\begin{remark}
Note that if, by some reason, $A_{21}$ is compact (or small plus compact) then $A|_{H_1}$ and $B|_{H_1}$ are Fredholm even without compositions with $C$, and there is no need in Lemmas \ref{Lemma1F} and \ref{Lemma2F}.

\end{remark}

\section{Application to pseudodifferential operators}

Let $D_1$, $D_2$ be two order zero pseudodifferential operators on a manifold $M$, with symbols $\sigma(D_1)$, $\sigma(D_2)$. Let $\sigma_1$, $\sigma_2$ be the restrictions of $\sigma(D_1)$ and $\sigma(D_2)$ onto the cospherical bundle $S^*M$. If $(\sigma_1,\sigma_2)$ is a balanced pair in matrices over $C(S^*M)$ then $(D_1,D_2)$ (more exactly, their compact perturbations) is a $\mathbb K$-balanced pair of operators. As the pair $(\sigma_1,\sigma_2)$ is balanced, it determines a class in $K^1(S^*M)$. The standard construction allows to define an integer-valued topological index for such pairs.

\begin{thm}
If $(\sigma_1,\sigma_2)$ is balanced in matrices over $C(S^*M)$ then $\ind(D_1,D_2)=\ind(\sigma_1,\sigma_2)$.

\end{thm}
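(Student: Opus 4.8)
The plan is to reduce the statement to the classical Atiyah--Singer index theorem by invoking the last theorem of the preceding section. After a compact perturbation of $D_1$ and $D_2$ (which alters neither the principal symbols nor any Fredholm index), the pair $(D_1,D_2)$ is genuinely $\mathbb K$-balanced, so that theorem applies and gives
$$
\ind(D_1,D_2)=\ind\bigl(1+D_2^*(D_1-D_2)\bigr).
$$
Thus it suffices to identify the right-hand side with the topological index $\ind(\sigma_1,\sigma_2)$.

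First I would compute the principal symbol of the order zero operator $E:=1+D_2^*(D_1-D_2)$. By multiplicativity of principal symbols modulo operators of negative order (which are compact on the compact manifold $M$), together with $\sigma(D_2^*)=\sigma(D_2)^*$, the restriction of $\sigma(E)$ to $S^*M$ equals $1+\sigma_2^*(\sigma_1-\sigma_2)=c(\sigma_1,\sigma_2)$. Applying the lemma asserting that $c(a,b)$ is unitary for a balanced pair, now over the $C^*$-algebra of matrices over $C(S^*M)$, we conclude that $c(\sigma_1,\sigma_2)$ is unitary, hence invertible at every point of $S^*M$. Therefore $E$ is an \emph{elliptic} operator; in particular the Fredholm index on the right-hand side above is the analytic index of an elliptic operator.

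Next I would invoke the Atiyah--Singer index theorem for $E$: its analytic index equals the topological index of the symbol class $[\sigma(E)]\in K^1(S^*M)$. It remains to recognize this class as the one attached to the balanced pair. By construction $[\sigma(E)]=[c(\sigma_1,\sigma_2)]$ is the image of $[(\sigma_1,\sigma_2)]$ under the isomorphism $\kappa\colon L_1(C(S^*M))\to K_1(C(S^*M))=K^1(S^*M)$ produced in Theorem \ref{Thm}, and the integer $\ind(\sigma_1,\sigma_2)$ is by definition the topological index of precisely this $K^1$-class. Chaining the three equalities yields $\ind(D_1,D_2)=\ind(\sigma_1,\sigma_2)$.

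The hard analytic work has already been carried out in the relative index section, so the genuine content here is bookkeeping of two kinds. The first is the symbolic computation together with the observation that balancedness of the symbols forces $c(\sigma_1,\sigma_2)$ to be pointwise invertible; this is exactly what upgrades the a priori merely Fredholm operator $1+D_2^*(D_1-D_2)$ to an elliptic one, so that Atiyah--Singer becomes applicable. The main obstacle, and the point deserving the most care, is the second: verifying that the topological index used for balanced pairs (the ``standard construction'' applied to the $K^1(S^*M)$-class of $(\sigma_1,\sigma_2)$) coincides exactly with the Atiyah--Singer topological index of the symbol of $E$, i.e. that the $K$-theoretic identification of the balanced pair with the unitary $c(\sigma_1,\sigma_2)$ is compatible with the two index constructions. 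Once that compatibility is pinned down, the proof is complete.
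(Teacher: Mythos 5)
Your proposal is correct and follows essentially the same route as the paper: perturb compactly to get a $\mathbb K$-balanced pair, use the identity $\ind(D_1,D_2)=\ind(1+D_2^*(D_1-D_2))$ from the relative index section, observe that this operator is pseudodifferential with unitary (hence elliptic) symbol $c(\sigma_1,\sigma_2)$, and conclude by Atiyah--Singer. The paper's own proof is just a terser version of this argument, leaving implicit the ellipticity observation and the $K^1$-class identification that you spell out.
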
 
\begin{proof}
If $(\sigma_1,\sigma_2)$ is balanced then compact perturbations of $D_1$ and $D_2$ give a $\mathbb K$-balanced pair of operators ($D_1$ and $D_2$ satisfy the relations (\ref{rel1}) or (\ref{rel2}) modulo compacts, but may not be contractions; the latter can be remedied by compact perturbations). Then the operator $U(D_1,D_2)$ (\ref{unitary}) is pseudodifferential with the symbol $U(\sigma_1,\sigma_2)$, hence $\ind U(D_1,D_2)=\ind U(\sigma_1,\sigma_2)$. 

\end{proof}

Let us consider some simple examples. 

\begin{example}
Let $M_1$, $M_2$ be two manifolds with the same boundary $\partial M_1=\partial M_2=N$, and let $M=M_1\cup_N M_2$. Let $(\sigma_1,\sigma_2)$ be a balanced pair in $M_n(C(S^*M))$ such that $\sigma_1=\sigma_2$ on $M_2$, and $\sigma_1,\sigma_2$ unitary on $M_1$. Then one can take $H_1=L^2(M_1)$, $H_2=L^2(M_2)$ (if necessary, one can take a finite codimension subspace in $L^2(M_2)$ as $H_2$). The relative index $\ind(D_1,D_2)$ in this case equals the relative index of the restrictions of $D_1$ and $D_2$ on $M_1$.

\end{example}

\begin{example}
Let $p\in M_n(S^*M)$ be a projection, $P$ a pseudodifferential operator with the symbol $p$. Suppose that $p$ commutes with $\sigma_1$ and with $\sigma_2$, and that $(1-p)(\sigma_1-\sigma_2)=0$. Set $\sigma'_1=p\sigma_1 p$, $\sigma'_2=p\sigma_2 p$, and let $D'_1$, $D'_2$ be pseudoddifferential operators with symbols $\sigma'_1$ and $\sigma'_2$ respectively. Then we are in the setting of operators in subspaces \cite{SSS}.

\end{example}

\begin{example}
Let $x_0\in M$, and let $\sigma_1|_{x=x_0}=\sigma_1|_{x=x_0}=0$. Then we are in the setting of elliptic operators on a noncompact manifold, or on a manifold with a singularity at $x_0$.

\end{example}

The examples above reduce to known cases, but the next example seems to be new.

\begin{example}
Let $a(t)$, $b(t)$, $t\in[0,\pi/2]$, be as in Example \ref{nontrivial}. Let $M=\mathbb S^1$ (with the points $0$ and $\pi/2$ glued together), then $S^*M=\{(t,i):t\in\mathbb S^1, i=\pm 1\}$. Set 
$$
\sigma_1(t,1)=a(t),\quad \sigma_2(t,1)=b(t);\qquad \sigma_1(t,-1)=\sigma_2(t,-1)=\left(\begin{smallmatrix}1&0\\0&1\end{smallmatrix}\right).
$$ 
Then the relative index for $D_1$, $D_2$ having symbols $\sigma_1$, $\sigma_2$ respectively, is well defined and can be evaluated from the functions $\alpha$ and $\beta$.

\end{example}

\end{document}